\newtheorem{thm}{Theorem}[section]
\newtheorem{cor}[thm]{Corollary}
\newtheorem{pro}[thm]{Proposition}
\theoremstyle{definition}
\theoremstyle{remark}
\newtheorem{rem}[thm]{Remark}
\numberwithin{equation}{section}
\newcommand{\set}[1]{\left\{#1\right\}}
\newcommand{\CC}{\mathbb{C}}
\newcommand{\ZZ}{\mathbb{Z}}
\newcommand{\E}{\mathbb{E}}
\newcommand{\PPP}{\mathbb{P}}
\newcommand{\X}{\mathbf{X}}
\newcommand{\Y}{\mathbf{Y}}
\newcommand{\V}{\mathbf{V}}
\newcommand{\Z}{\mathbf{Z}}
\newcommand{\0}{\mathbf{0}}
\newcommand{\uu}{\mathbf{u}}
\newcommand{\vv}{\mathbf{v}}
\newcommand{\UU}{\bm{U}}
\newcommand{\WW}{\bm{W}}
\newcommand{\EE}{\bm{E}}
\newcommand{\DD}{\bm{D}}
\newcommand{\A}{\bm{A}}
\newcommand{\C}{\bm{C}}
\newcommand{\I}{\bm{I}}
\newcommand{\M}{\bm{M}}
\newcommand{\T}{\bm{T}}
\newcommand{\OO}{\bm{O}}
\newcommand{\f}{\bm{f}}
\newcommand{\etav}{\bm{\eta}}
\newcommand{\xiv}{\bm{\xi}}
\newcommand{\Phiv}{\bm{\Phi}}
\newcommand{\Siga}{\bm{\Sigma}}
\newcommand{\Lam}{\bm{\Lambda}}
\newcommand{\av}{\bm{a}}
\newcommand{\bv}{\bm{b}}
\newcommand{\wv}{\bm{w}}
\newcommand{\phiv}{\bm{\phi}}
\newcommand{\deltav}{\bm{\delta}}
\newcommand{\Deltav}{\bm{\Delta}}
\newcommand{\muv}{\bm{\mu}}
\newcommand{\F}{\bm{F}}
\newcommand{\Lada}{\bm{\Lambda}}
\newcommand{\psiv}{\bm{\psi}}
\newcommand{\proj}{\mathrm{Proj}}
\newcommand{\diag}{\mathrm{diag}}
\newcommand{\rk}{\mathrm{rank}}
\newcommand{\cov}{\mathrm{Cov}}
\newcommand{\tr}{\, \mathrm{tr}}
\newcommand{\spanot}{\overline{\mathrm{span}}}
\newcommand{\spant}{\mathrm{Span}}
\begin{document}

\title{Regular multidimensional stationary time series}%
\author{Tam\'as Szabados}%
\address{Budapest University of Technology and Economics}%
\email{szabados@math.bme.hu}%

\keywords{time series, stationarity, regularity, dynamic PCA; MSC2020: 62M10, 60G10}%

\begin{abstract}
The aim of this paper is to give a simpler, more usable sufficient condition to the regularity of generic weakly stationary time series. Also, this condition is used to show how regular processes satisfying these sufficient conditions can be approximated by a lower rank \emph{regular} process. The relevance of these issues is shown by the ever increasing presence of high-dimensional data in many fields lately, and because of this, low rank processes and low rank approximations are becoming more important. Moreover, regular processes are the ones which are completely influenced by random innovations, so they are primary targets both in the theory and applications.
\end{abstract}

\maketitle


\section{Introduction}\label{se:intro}

Let $\X_t = (X_t^1, \dots , X_t^d)$, $t \in \ZZ$, be a $d$-dimensional weakly stationary time series, where each $X_t^j$ is a complex valued random variable on the same probability space $(\Omega, \mathcal{F}, \PPP)$. It is a second order, and in this sense, translation invariant process:
\[
\E \X_t = \muv \in \CC^d, \qquad \E((\X_{t+h}-\muv)(\X^*_t - \muv^*)) = \C(h) \in \CC^{d \times d}, \qquad \forall t,h \in \ZZ,
\]
where $\C(h)$, $h \in \ZZ$, is the non-negative definite covariance matrix function of the process. Without loss of generality, from now on it is assumed that $\muv = \0$.

Thus the considered random variables will be $d$-dimensional, square integrable, zero expectation random complex vectors in the Hilbert space $L^2_d(\Omega, \mathcal{F}, \PPP)$, with inner product
\[
\langle \X, \Y \rangle := \sum_{j=1}^d \E(X^j \overline{Y^j}) = \tr \; \cov(\X, \Y) .
\]
However, the \emph{orthogonality }of the random vectors $\X$ and $\Y$ is defined by the stronger relationship $\X \perp \Y \Leftrightarrow \cov(\X, \Y) = \E(\X \Y^*) = \OO$. If
\begin{align*}
M &= \spant\{X^{\ell}_{\gamma} : \ell = 1, \dots , d; \gamma \in \Gamma\} := \left\{\sum_{j=1}^n
\sum_{\ell =1}^d a_{j\ell}  X^{\ell }_{\gamma_j} : a_{j\ell}  \in \CC,
 n \ge 1\right\} ,  \\
M_d &= \spant_d\{\X_{\gamma} : \gamma \in \Gamma\} := \left\{\sum_{j=1}^n \A_j \X_{\gamma_j} : \A_j \in \CC^{d \times d}, n \ge 1\right\} ,
\end{align*}
then $M_d = M \times \cdots \times M$, a Cartesian product with $d$ factors, where $M \in L^2(\Omega, \mathcal{F}, \PPP)$ and $M_d \in L^2_d(\Omega, \mathcal{F}, \PPP)$.

The \emph{past of $\{\X_t\}$ until time $n \in \ZZ$} is the closed linear span in $L^2_d(\Omega, \mathcal{F}, \PPP)$ of the past and present values of the process: $H^-_n := \spanot_d \set{\X_t : t \le n}$. The \emph{remote past} of $\{\X_t\}$ is $H_{-\infty} := \bigcap_{n \in \ZZ} H^-_n$. The process $\{\X_t\}$ is called \emph{regular}, if $H_{-\infty} = \{ \0 \}$ and it is called \emph{singular} if $H_{-\infty} = H(\X) := \spanot_d \set{\X_t : t \in \ZZ}$. Of course, there is a range of cases between these two extremes.

Singular processes are also called \emph{deterministic}, see e.g.\ Brockwell, Davis, and Fienberg (1991), because based on the past $H^-_0$, future values $\X_{1}$, $\X_{2}$, \dots, can be predicted with zero mean square error. On the other hand, regular processes are also called \emph{purely non-deterministic}, since their behavior are completely influenced by random \emph{innovations}. Consequently, knowing $H^-_0$, future values $\X_{1}$, $\X_{2}$, \dots, can be predicted only with positive mean square errors $\sigma^2_1$, $\sigma^2_2$, \dots, and $\lim_{t \to \infty} \sigma^2_t = \| \X_0 \|^2$. This shows why studying \emph{regular} time series is a primary target both in the theory and  applications. The Wold decomposition proves that any non-singular process can be decomposed into an orthogonal sum of a nonzero regular and a singular process. This also supports why it is important to study regular processes.

The Wold decomposition implies, see e.g.\ the classical references Rozanov (1967) and Wiener and Masani (1957), that $\{\X_t\}$ is regular if and only if $\{\X_t\}$ can be written as a causal infinite moving average
\begin{equation}\label{eq:causal_MA}
\X_t = \sum_{j=0}^{\infty} \bv(j) \xiv_{t-j}, \qquad t \in \ZZ, \qquad \bv(j) \in \CC^{d \times r} ,
\end{equation}
where $\{\xiv_t\}_{t \in \ZZ}$ is an $r$-dimensional $(r \le d)$ orthonormal white noise sequence: $\E \xiv_t = \0$, $\E(\xiv_s \xiv^*_t) = \delta_{s t} \I_r$, $\I_r$ is the $r \times r$ identity matrix. Equivalently,
\begin{equation}\label{eq:Wold_innov}
\X_t = \sum_{j=0}^{\infty} \av(j) \etav_{t-j}, \qquad t \in \ZZ, \qquad \av(j) \in \CC^{d \times d} ,
\end{equation}
where $\{\etav_t\}_{t \in \ZZ}$ is the $d$-dimensional white noise process of \emph{innovations}:
\begin{equation}\label{eq:innovation}
\etav_t := \X_t - \proj_{H^-_{t-1}} \X_t,  \quad t \in \mathbb{Z} ,
\end{equation}
$\E \etav_t = \0$, $\E(\etav_s \etav^*_t) = \delta_{s t} \Siga$, $\Siga$ is a $d \times d$ non-negative definite matrix of rank $r$.

It is also classical that any weakly stationary process has a non-negative definite spectral measure matrix $d\F$ on $[-\pi, \pi]$ such that
\[
\C(h) = \int_{-\pi}^{\pi} e^{i h \omega} d\F(\omega), \qquad h \in \ZZ.
\]
Then $\{\X_t\}$ is regular, see again e.g.\ Rozanov (1967) and Wiener and Masani (1957), if and only if $d\F  = \f$, the spectral density $\f$ has a.e.\ constant rank $r$, and can be factored in a form
\begin{equation}\label{eq:f_factor}
\f(\omega) = \frac{1}{2 \pi} \phiv(\omega) \phiv^*(\omega) , \quad \phiv(\omega) = [\phi_{k \ell}(\omega)]_{d \times r} , \quad \text{for a.e.\ } \omega \in [-\pi, \pi] ,
\end{equation}
where
\begin{equation}\label{eq:bj_Fourier}
\phiv(\omega) = \sum_{j=0}^{\infty} \tilde{\bv}(j) e^{-i j \omega}, \quad \|\phiv\|^2_2 = \sum_{j=0}^{\infty} \|\tilde{\bv}(j)\|^2_F < \infty,
\end{equation}
$\| \cdot \|_F$ denotes Frobenius norm. Consequently,
\begin{equation}\label{eq:Phiv}
\phiv(\omega) = \Phiv(e^{-i \omega}), \quad \omega \in (-\pi, \pi], \quad \Phiv(z) = \sum_{j=0}^{\infty} \tilde{\bv}(j) z^j,  \quad z \in D,
\end{equation}
so the entries of the \emph{spectral factor} $\Phiv(z) = [\Phi_{j k}(z)]_{d \times r}$ are  analytic functions in the open unit disc $D$ and belong to the class $L^2(T)$ on the unit circle $T$, consequently, they belong to the Hardy space $H^2$. Briefly, it is written as $\Phiv \in H^2$. Observe that there is a one-to-one correspondence between the interval $(-\pi, \pi]$ and the unit circle $T$ and, consequently, between functions defined on them.

Recall that the Hardy space $H^p$, $0 < p \le \infty$, denotes the space of all functions $f$ analytic in $D$ whose $L^p$ norms over all circles $\{z \in \CC : |z| = r\}$, $0 < r < 1$, are bounded, see e.g.\  Rudin (2006, Definition 17.7). If $p \ge 1$, then equivalently, $H^p$ is the Banach space of all functions $f \in L^p(T)$ such that
\[
f(e^{i \omega}) = \sum_{n=0}^{\infty} a_n e^{i n \omega} , \qquad \omega \in [-\pi, \pi],
\]
so the Fourier series of $f(e^{i \omega})$ is one-sided, $a_n = 0$ when $n < 0$, see e.g.\ Fuhrmann (2014, Section II.12). In particular, $H^2$ is a Hilbert space, which by Fourier transform isometrically isomorphic with the $\ell^2$ space of sequences $\{a_0, a_1, a_2, \dots\}$ with norm square
\[
\frac{1}{2 \pi} \int_{-\pi}^{\pi} |f(e^{i \omega})|^2 d\omega = \sum_{n=0}^{\infty} |a_n|^2 .
\]

For a one-dimensional time series $\{X_t\}$ ($d=1$) there exists a rather simple sufficient and necessary condition of regularity given by Kolmogorov (1941):
\begin{itemize}
  \item[(1)] $\{X_t\}$ has an absolutely continuous spectral measure with density $f$,
  \item[(2)] $\log f \in L^1$, that is, $\int_{-\pi}^{\pi} \log f(\omega) d\omega > -\infty$.
\end{itemize}
Then the Kolmogorov--Szeg\H{o} formula also holds:
\[
\sigma^2 = 2\pi \exp \int_{-\pi}^{\pi} \log f(\omega) \frac{d \omega}{2\pi}  ,
\]
where $\sigma^2$ is the variance of the innovations $\eta_t := X_t - \proj_{H^-_{t-1}} X_t$, that is, the variance of the one-step-ahead prediction.

For a multidimensional time series $\{\X_t\}$ which has \emph{full rank}, that is, when $\f$ has a.e.\ \emph{full rank}: $r=d$, and so the innovations $\etav_t$ defined by \eqref{eq:innovation} have full rank $d$, there exists a similar simple sufficient and necessary condition of regularity, see Rozanov (1967) and Wiener and Masani (1957):
\begin{itemize}
  \item[(1)] $\{\X_t\}$ has an absolutely continuous spectral measure matrix $d\F$ with density matrix $\f$,
  \item[(2)] $\log \det \f \in L^1$, that is $\int_{-\pi}^{\pi} \log \det \f(\omega) d\omega > -\infty$.
\end{itemize}
Then the $d$-dimensional Kolmogorov--Szeg\H{o} formula also holds:
\begin{equation}\label{eq:Kolm_Szego_mult}
\det \Siga = (2\pi)^d \exp \int_{-\pi}^{\pi} \log \det \bm{f}(\omega) \frac{d \omega}{2\pi}  ,
\end{equation}
where $\Siga$ is the covariance matrix of the innovations $\etav_t$ defined by \eqref{eq:innovation}.

On the other hand, the generic case of regular time series is more complicated. To my best knowledge, the result one has is Rozanov (1967, Theorem 8.1):
A $d$-dimensional stationary time series $\{\X_t\}$ is regular and of rank $r$, $1 \le r \le d$, if and only if each of the following conditions holds:
\begin{itemize}
  \item[(1)] It has an absolutely continuous spectral measure matrix $d\F$ with density matrix $\f(\omega)$ which has rank $r$ for a.e.\ $\omega \in[-\pi, \pi]$.
  \item[(2)] The density matrix $\f(\omega)$ has a principal minor $M(\omega) = \det [f_{i_p j_q}(\omega)]_{p,q=1}^r$, which is nonzero a.e.\ and
      \[
      \int_{-\pi}^{\pi} \log M(\omega) d\omega > - \infty.
      \]
   \item[(3)] Let $M_{k \ell}(\omega)$ denote the determinant obtained from $M(\omega)$ by replacing its $\ell$th row by the row $[f_{k i_p}(\omega)]_{p=1}^r$. Then the functions $\gamma_{k \ell}(e^{-i \omega}) = M_{k \ell}(\omega)/M(\omega)$ are boundary values of functions of the Nevanlinna class $N$.
\end{itemize}
It is immediately remarked in the cited reference that ``unfortunately, there is no general method determining, from the boundary value $\gamma(e^{-i \omega})$ of a function $\gamma(z)$, whether it belongs to the class $N$.''

Recall that the Nevanlinna class $N$ consists of all functions $f$ analytic in the open unit ball $D$ that can be written as a ratio $f = f_1/f_2$, $f_1 \in H^p$, $f_2 \in H^q$, $p, q > 0$, where $H^p$ and $H^q$ denote Hardy spaces, see e.g.\ Nikolski (1991, Definition 3.3.1).

The aim of this paper is to give a simpler, more usable sufficient condition to the regularity of generic weakly stationary time series. Also, this condition is used to show how regular processes satisfying these sufficient conditions can be approximated by a lower rank \emph{regular} process.

\section{Generic regular processes}\label{sse:generic_regular}

A simple idea is to use a spectral decomposition of the spectral density matrix to find an $H^2$ spectral factor if possible. (Take care that here we use the word `spectral' in two different meanings. On one hand, we use the spectral density of a time series in terms of a Fourier spectrum, on the other hand we take the spectral decomposition of a matrix in terms of eigenvalues and eigenvectors.)

So let $\{\X_t\}$ be a $d$-dimensional stationary time series and assume that its spectral measure matrix $d\F$ is absolutely continuous with density matrix $\f(\omega)$ which has rank $r$, $1 \le r \le d$, for a.e.\ $\omega \in [-\pi, \pi]$. Take the parsimonious spectral decomposition of the self-adjoint, non-negative definite matrix $\f(\omega)$:
\begin{equation}\label{eq:spectr_decomp_f_short1}
\f(\omega) = \sum_{j=1}^{r} \lambda_j(\omega) \uu_j(\omega) \uu^*_j(\omega) = \tilde{\UU}(\omega) \Lada_r(\omega) \tilde{\UU}^*(\omega) ,
\end{equation}
where
\begin{equation}\label{eq:Lam_r}
\Lada_r(\omega) = \diag[\lambda_1(\omega), \dots , \lambda_r(\omega)], \quad \lambda_1(\omega) \ge \cdots \ge \lambda_r(\omega) > 0, \quad \text{a.e.} \quad \omega \in [-\pi, \pi ],
\end{equation}
is the diagonal matrix of nonzero eigenvalues of $\f(\omega)$ and
\[
\tilde{\UU}(\omega) = [\uu_1(\omega), \dots , \uu_r(\omega)] \in \CC^{d \times r}
\]
is a sub-unitary matrix of corresponding right eigenvectors, not unique even if all eigenvalues are distinct. Then, still, we have
\begin{equation}\label{eq:spectr_decomp_f_short}
\Lam_r(\omega) = \tilde{\UU}^*(\omega) \f(\omega) \tilde{\UU}(\omega) .
\end{equation}

The matrix function $\Lam_r(\omega)$ is a self-adjoint, positive definite function, $\tr(\Lam_r(\omega))$ $= \tr(\f(\omega))$, where $\f(\omega)$ is the density function of a finite spectral measure. This shows that the integral of $\tr(\Lam_r(\omega))$ over $[-\pi, \pi]$ is finite. Thus $\Lam_r(\omega)$ can be considered the spectral density function of a full rank regular process. (See the details below, in the proof of Theorem \ref{th:generic_reg_nec}.) So it can be factored, in fact, we may take a maximal $H^2$ spectral factor $\DD_r(\omega)$ of it:
\begin{equation}\label{eq:Lamr_factor1}
\Lam_r(\omega) = \frac{1}{2 \pi} \DD_r(\omega) \DD_r(\omega),
\end{equation}
where $\DD_r(\omega)$ is a diagonal matrix.

Then a simple way to factorize $\f$ is to choose
\begin{equation}\label{eq:trivial_spectral_factor}
\phiv(\omega) = \tilde{\UU}(\omega) \DD_r(\omega) \A(\omega) = \tilde{\UU}(\omega) \A(\omega) \DD_r(\omega) = \tilde{\UU}_{\A}(\omega) \DD_r(\omega),
\end{equation}
where $\A(\omega) = \diag[a_1(\omega), \dots , a_r(\omega)]$, each $a_k(\omega)$ being a measurable function on $[-\pi, \pi]$ such that $|a_k(\omega)| = 1$ for any $\omega$, but otherwise arbitrary and $\tilde{\UU}_{\A}(\omega)$ still denotes a sub-unitary matrix of eigenvectors of $\f$ in the same order as the one of the eigenvalues.

Presently I do not know if for any regular time series $\{\X_t\}$ there exists a matrix valued function $\A(\omega)$ such that $\phiv(\omega)$ defined by \eqref{eq:trivial_spectral_factor} has a Fourier series with only non-negative powers of $e^{-i \omega}$ or not. Equivalently, does there exist an $H^2$ spectral factor $\Phiv(z)$  whose boundary value is the above $\phiv(\omega)$ with some $\A(\omega)$, according to the formulas \eqref{eq:f_factor}--\eqref{eq:Phiv}?

A useful consequence of the fact if the regular time series $\{\X_t\}$ is such that does have an $H^2$ spectral factor of the form \eqref{eq:trivial_spectral_factor} is that then one can easily find regular low rank approximations with rather easily computable mean square errors, see Subsection \ref{sse:low_regular}.

\begin{thm}\label{th:generic_reg_nec}

\begin{itemize}
\item[(a)] Assume that a $d$-dimensional stationary time series $\{\X_t\}$ is regular of rank $r$, $1 \le r \le d$. Then for $\Lam_r(\omega)$ defined by \eqref{eq:Lam_r} one has $\log \det \Lam_r \in L^1 = L^1([-\pi, \pi], \mathcal{B}, d\omega)$, equivalently,
\begin{equation}\label{eq:log_lambda}
    \int_{-\pi}^{\pi} \log \lambda_r(\omega) \, d\omega > -\infty .
\end{equation}

\item[(b)] If moreover one assumes that the regular time series $\{\X_t\}$ is such that has an $H^2$ spectral factor of the form \eqref{eq:trivial_spectral_factor}, then the following statement holds as well:

The sub-unitary matrix function $\tilde{\UU}(\omega)$ appearing in the spectral decomposition of $\f(\omega)$ in \eqref{eq:spectr_decomp_f_short1} can be chosen so that it belongs to the Hardy space $H^{\infty} \subset H^2$, thus

\begin{equation}\label{eq:UU_Fourier_series}
  \tilde{\UU}(\omega) = \sum_{j=0}^{\infty} \psiv(j) e^{-i j \omega} , \quad \psiv(j) \in \CC^{d \times r}, \quad \sum_{j=0}^{\infty} \|\psiv(j)\|^2_F < \infty .
\end{equation}
In this case one may call $\tilde{\UU}(\omega)$ a matrix inner function.
\end{itemize}
\end{thm}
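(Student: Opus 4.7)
Both parts will be proved using the $H^2$ spectral factor $\phiv \in \CC^{d \times r}$ of $\f = (2\pi)^{-1}\phiv\phiv^*$ provided by regularity, combined with the Nevanlinna--Smirnov theory of Hardy spaces. For part (a), note that the nonzero eigenvalues of $\phiv\phiv^*$ coincide with those of the $r \times r$ matrix $\phiv^*\phiv$, so $\det\Lam_r = (2\pi)^{-r} \det(\phiv^*\phiv)$ a.e. By the Cauchy--Binet formula,
\[
  \det(\phiv^*\phiv)(\omega) = \sum_I \bigl|\det \phiv_I(\omega)\bigr|^2,
\]
the sum running over $r$-element row index sets $I\subset\{1,\dots,d\}$, where $\phiv_I$ denotes the corresponding $r\times r$ submatrix. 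Because $\rk\phiv = r$ a.e., some $\det\phiv_{I_0}$ is not identically zero; being a polynomial in $H^2\subset N$ entries, $\det\phiv_{I_0}$ lies in the Nevanlinna class $N$, which guarantees $\log|\det\phiv_{I_0}^*| \in L^1(T)$. The pointwise bound $\det(\phiv^*\phiv) \ge |\det\phiv_{I_0}|^2$ then gives $\int_{-\pi}^{\pi}\log\det\Lam_r\,d\omega > -\infty$. The positive part of $\log\det\Lam_r$ is also integrable since AM--GM yields $\det\Lam_r \le (\tr\f/r)^r$ and $\tr\f\in L^1$. Hence $\log\det\Lam_r = \sum_j \log\lambda_j \in L^1$; combined with integrability of each $(\log\lambda_j)^+$, this forces $(\log\lambda_r)^- \in L^1$, establishing \eqref{eq:log_lambda}.

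For part (b), choose each $d_j$ to be the outer $H^2$ function with $|d_j|^2 = 2\pi\lambda_j$ (which exists by part (a)), so that $\DD_r = \diag[d_1,\dots,d_r]$ is outer and invertible on $D$; set $\tilde{\UU}(z) := \phiv(z)\DD_r^{-1}(z)$, analytic in $D$. Its boundary values agree with the sub-unitary matrix $\tilde{\UU}_\A$ in the hypothesized factorization \eqref{eq:trivial_spectral_factor}, so each entry $\tilde{\UU}_{kj} = \phi_{kj}/d_j$ satisfies $|\tilde{\UU}_{kj}^*(\omega)| \le 1$ a.e.\ on $T$. To propagate this boundary bound inside $D$, take the inner--outer factorization $\phi_{kj} = I_{kj} O_{kj}$; then $\phi_{kj}/d_j = I_{kj}\cdot(O_{kj}/d_j)$, and $O_{kj}/d_j$ is outer as a quotient of outer functions. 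Since any outer function $F$ satisfies $\log|F(z)| = P[\log|F^*|](z)$ (Poisson integral), the boundary bound $|O_{kj}/d_j|\le 1$ on $T$ forces $|O_{kj}/d_j| \le 1$ throughout $D$. Therefore $|\tilde{\UU}_{kj}| = |I_{kj}|\cdot|O_{kj}/d_j| \le 1$ in $D$, so $\tilde{\UU}_{kj}\in H^\infty$ and hence $\tilde{\UU}\in H^\infty$; the Fourier expansion \eqref{eq:UU_Fourier_series} with its $\ell^2$-bound then follows from $H^\infty \subset H^2$.

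The delicate step is part (b): a priori the quotient $\phi_{kj}/d_j$ lies only in the Nevanlinna class $N$, where bounded boundary values need not imply boundedness inside $D$. The diagonal structure hypothesized in \eqref{eq:trivial_spectral_factor} is precisely what allows the inner--outer factorization to isolate an outer quotient $O_{kj}/d_j$ (ensuring no singular inner factor appears in the ``denominator''), placing $\phi_{kj}/d_j$ in the Smirnov subclass $N^+$ where the Poisson-integral argument applies. Without such a structural hypothesis one cannot pull the scalar outer factors of the $\lambda_j$'s out of a matrix $H^2$ factor to expose a matrix inner function $\tilde{\UU}$.
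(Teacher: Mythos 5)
Your proof is correct and follows the same overall strategy as the paper --- extract the $H^2$ spectral factor $\Phiv$ guaranteed by regularity, bound $\log \det \Lam_r$ from above via $\tr \f \in L^1$ and from below via the log-integrability of a non-vanishing $r \times r$ minor of $\Phiv$, and in part (b) recover $\tilde{\UU}$ as $\Phiv$ times the inverse of the diagonal outer factor of $\Lam_r$ --- but it differs in two worthwhile ways. In part (a) you apply Cauchy--Binet to $\det(\phiv^*\phiv) = (2\pi)^r \det \Lam_r$ and keep a single non-vanishing term $|\det \phiv_{I_0}|^2$, whereas the paper bounds $\det \Lam_r$ below by a non-vanishing principal minor $M(\omega)$ of $\f$ together with the estimate $|M_{\tilde{\UU}}|^2 \le (r!)^2$; the two lower bounds are of the same nature, but yours avoids bringing $\tilde{\UU}$ into part (a) at all. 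More significantly, in part (b) the paper passes from ``$\WW$ is analytic in $D$ and its boundary values are bounded'' directly to $\WW \in H^{\infty}$, which is not valid for an arbitrary quotient of $H^2$ functions with zero-free denominator: a Nevanlinna-class function such as $\exp\left((1+z)/(1-z)\right)$ has unimodular boundary values yet is unbounded in $D$. Your argument supplies exactly the missing ingredient, by choosing the $d_j$ \emph{outer}, splitting $\phi_{kj} = I_{kj} O_{kj}$ into inner and outer parts, and using the Poisson representation of $\log |O_{kj}/d_j|$ to propagate the boundary bound into the disc --- in effect a Smirnov-class ($N^+$) maximum principle. So your write-up is not only correct but repairs the one step that the paper's own proof leaves unjustified.
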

\begin{proof}
Assume first that $\{\X_t\}$ is regular and of rank $r$, $1 \le r \le d$. Here we are going to use the Wold decomposition. Then it follows that $\{\X_t\}$ has an absolutely continuous spectral measure with density matrix $\f(\omega)$, which has constant rank $r$ for a.e.\ $\omega$.

In order to check that $\log \det \Lam_r \in L^1$, it is enough to show that \eqref{eq:log_lambda} holds. First,
\[
\int_{-\pi}^{\pi} \log \det \Lam_r(\omega) d \omega < \infty
\]
is always true. Namely, using the inequality $\log x < x$ if $x > 0$, we see that
\[
\log \det \Lam_r(\omega) = \sum_{j=1}^r \log \lambda_j(\omega) < \tr(\Lam_r(\omega)) = \tr(\f(\omega)) \in L^1,
\]
since the spectral density $\f$ is an integrable function. Second, by \eqref{eq:Lam_r}, \begin{equation}\label{eq:log_det_lam_all}
\int_{-\pi}^{\pi} \sum_{j=1}^r \log \lambda_j(\omega) d \omega > -\infty \quad \Leftrightarrow \quad \int_{-\pi}^{\pi} \log \lambda_r(\omega) \, d\omega > -\infty .
\end{equation}

By formulas \ref{eq:f_factor}--\ref{eq:Phiv} it follows that
\begin{equation}\label{eq:factorization}
\f(\omega) = \frac{1}{2 \pi} \Phiv(e^{-i \omega}) \Phiv^*(e^{-i \omega}),
\end{equation}
where the spectral factor $\Phiv(z) = [\Phi_{jk}(z)]_{d \times r}$ is an analytic function in the open unit disc $D$ and $\Phiv \in H^2$. Equality \eqref{eq:factorization} implies that every principal minor $M(\omega) = \det [f_{j_p j_q}]_{p, q = 1}^r$  of $\f$ is a constant times the product of a minor $M_{\Phiv}(e^{-i \omega})$ of $\Phiv(e^{-i \omega})$ and its conjugate:
\begin{align}\label{eq:principal_minor}
M(\omega) &= (2 \pi)^{-r} \det [\Phi_{j_p k}(e^{-i \omega})]_{p,k=1}^r \det [\overline{\Phi_{j_p k}(e^{-i \omega})}]_{p,k=1}^r \nonumber \\
&= (2 \pi)^{-r} \left|\det [\Phi_{j_p k}(e^{-i \omega})]_{p,k=1}^r\right|^2 =: (2 \pi)^{-r} |M_{\Phiv}(e^{-i \omega})|^2 .
\end{align}
The row indices of the minor $M_{\Phiv}(z)$ in the matrix $\Phiv(z)$ are the same indices $j_p$, $p = 1, \dots , r$, that define the principal minor $M(\omega)$ in the matrix $\f(\omega)$. Since the function $M_{\Phiv}(z) = \det [\Phi_{j_p k}(z)]_{p, k = 1}^r$ is analytic in $D$, it is either identically zero or is different from zero a.e. The rank of $\f$ is $r$ a.e., therefore the sum of all its principal minors of order $r$ (which are non-negative since $\f$ is non-negative definite) must be different from zero a.e. The last two sentences imply that there exists a principal minor $M(\omega)$ of order $r$ which is different from zero a.e. We are using this principal minor $M(\omega)$ from now on.

The entries of $\Phiv(z)$ are in $H^2$, so e.g.\ by Wiener and Masani(1957, Lemma 3.7) it follows that the determinant $M_{\Phiv}(z) \in H^{2/r}$. Then e.g.\ Rudin (2006, Theorem 17.17) implies that $\log |M_{\Phiv}(e^{-i \omega})| \in L^1$, which in turn with \eqref{eq:principal_minor} imply that
\begin{align}\label{eq:logdetLam_r}
\log M(\omega) = \log \left\{(2 \pi)^{-r} \left|M_{\Phiv}(e^{-i \omega})\right|^2\right\}
= -r \log 2 \pi + 2 \log \left|M_{\Phiv}(e^{-i \omega})\right| \in L^1 .
\end{align}

Further, let us define the corresponding minor of the matrix $\tilde{\UU}(\omega)$ by $M_{\tilde{\UU}}(\omega) := \det[\tilde{U}_{j_p k}]_{p,k=1}^r$. Since $\tilde{\UU}(\omega)$ is a sub-unitary matrix, its each entry has absolute value less than or equal to $1$. Consequently, $|M_{\tilde{\UU}}(\omega)| \le r!$. By \eqref{eq:spectr_decomp_f_short1},
\[
M(\omega) = M_{\tilde{\UU}}(\omega) \det \Lam_r(\omega) \overline{M_{\tilde{\UU}}(\omega)} = \det \Lam_r(\omega) \, |M_{\tilde{\UU}}(\omega)|^2 .
\]
It follows that
\[
\log \det \Lam_r(\omega) = \log M(\omega) - 2 \log |M_{\tilde{\UU}}(\omega)| \ge \log M(\omega) - 2 \log(r!),
\]
which with \eqref{eq:logdetLam_r} and \eqref{eq:log_det_lam_all} shows \eqref{eq:log_lambda}, and this proves part (a) of the theorem.

As was mentioned above before this theorem, the matrix function $\Lam_r(\omega)$ is a self-adjoint, positive definite function, it can be considered as the spectral density function of an $r$-dimensional stationary time series $\{\V_t\}_{t \in \Z}$ of full rank $r$. In fact, by linear filtering we define
\begin{equation}\label{eq:Vt_filter_def}
\V_t := \int_{-\pi}^{\pi} e^{i t \omega} \tilde{\UU}^*(\omega) d\Z_{\omega}, \quad t \in \mathbb{Z}.
\end{equation}
Then its auto-covariance function and spectral density are really given by
\begin{equation}\label{eq:Vtprocess}
\C_{\V}(h) = \int_{-\pi}^{\pi} e^{i h \omega} \tilde{\UU}^*(\omega) \f(\omega) \tilde{\UU}(\omega) d\omega = \int_{-\pi}^{\pi} e^{i h \omega} \Lam_r(\omega) d\omega, \quad h \in \ZZ .
\end{equation}
The classical case of full rank regular processes described in Section 1 and condition (a) of the present theorem show that $\{\V_t\}$ is a regular time series.

Thus one can take a maximal $H^2$ spectral factor $\DD_r(\omega)$ by formula  \eqref{eq:Lamr_factor1}. It follows that
\[
 \DD_r(\omega) = \sum_{j=0}^{\infty} \deltav(j) e^{-i j \omega}, \quad \sum_{j=0}^{\infty} \|\deltav(j)\|_F^2 < \infty,
\]
and
\begin{equation}\label{eq:Lamr_factor3}
\Deltav(z):= \sum_{j=0}^{\infty} \deltav(j) z^j, \quad z \in D, \quad
\Deltav(e^{-i \omega}) = \DD_r(\omega),
\end{equation}
$\Deltav(z)$ is a maximal $H^2$ spectral factor. Since the non-diagonal entries of the boundary value $\DD_r(\omega)$ are zero, it follows that the non-diagonal entries of $\Deltav(z)$ are also zero, see the integral formulas in Rudin (2006, Theorem 17.11). Since the diagonal entries of $\Lam_r(\omega)$ are positive, it follows that the diagonal entries of $\DD_r(\omega) = \Deltav(e^{-i \omega})$ are nonzero. The components of the process $\{\V_t\}$ are regular 1D time series, so by Kolmogorov (1941, Theorme 21) the $H^2$-functions $\Delta_{kk}(z)$ $(k=1, \dots , r)$ have no zeros in the open unit disc $D$.

Now assume that the spectral density $\f$ has an $H^2$ spectral factor $\phiv(z)$ of the form \eqref{eq:trivial_spectral_factor}, $\phiv(\omega) = \Phiv(e^{-i \omega})$. It means that $\Phiv(e^{-i \omega}) = \tilde{\UU}(\omega) \Deltav(e^{-i \omega})$,
\[
\tilde{\UU}(\omega) = \Phiv(e^{-i \omega}) \left[
                                      \begin{array}{ccc}
                                        \Delta_{11}^{-1}(e^{-i \omega}) & \cdots & 0 \\
                                        \vdots & \ddots & \vdots \\
                                        0 & \cdots & \Delta_{rr}^{-1}(e^{-i \omega}) \\
                                      \end{array}
                                    \right] .
\]
 Consequently, the entries on the right hand side are boundary values of the ratio of two  $H^2$-functions, and the denominator has no zeros in $D$. Hence $\tilde{\UU}(\omega)$ is the boundary value of an analytic function $\WW(z) = [w_{k \ell}(z)]_{d \times r}$ defined in $D$:
\begin{equation}\label{eq:U_W1}
\tilde{\UU}(\omega) = \WW(e^{-i \omega}) .
\end{equation}
Moreover, since the boundary value $\UU(\omega)$ is unitary, its entries are bounded functions. It implies that
\begin{align}\label{eq:U_W2}
\WW(z) &= \sum_{j=0}^{\infty} \psiv(j) z^j \in H^{\infty} \subset H^2 , \nonumber \\
\tilde{\UU}(\omega) &= \sum_{j=0}^{\infty} \psiv(j) e^{-i j \omega} , \quad \sum_{j=0}^{\infty} \|\psiv(j)\|^2_F < \infty .
\end{align}
This proves part (b) of the theorem.
\end{proof}

The next theorem gives a sufficient condition for the regularity of a generic weakly stationary time series; compare with the statements of Theorem \ref{th:generic_reg_nec} above. Observe that assumptions (1) and (2) in the next theorem are necessary conditions of regularity as well. Only assumption (3) is not known to be necessary. We think that these assumptions are simpler to check in practice then the ones of Rozanov's theorem cited above. By formula \eqref{eq:trivial_spectral_factor}, checking assumption (3) means that for each  eigenvectors $\uu_k(\omega)$ of $\f(\omega)$ we are searching for a complex function multiplier $a_k(\omega)$ of unit absolute value that gives an $H^{\infty}$ function result.

\begin{thm}\label{th:generic_reg_suf}
Let $\{\X_t\}$ be a $d$-dimensional time series. It is regular of rank $r \le d$ if the following three conditions hold.
\begin{itemize}

\item[(1)]
It has an absolutely continuous spectral measure matrix $d\F$ with density matrix $\f(\omega)$ which has rank $r$ for a.e.\ $\omega \in[-\pi, \pi]$.

\item[(2)]
For $\Lam_r(\omega)$ defined by \eqref{eq:Lam_r} one has $\log \det \Lam_r \in L^1 = L^1([-\pi, \pi], \mathcal{B}, d\omega)$, equivalently, \eqref{eq:log_lambda} holds.

\item[(3)]
The sub-unitary matrix function $\tilde{\UU}(\omega)$ appearing in the spectral decomposition of $\f(\omega)$ in \eqref{eq:spectr_decomp_f_short1} can be chosen so that it belongs to the Hardy space $H^{\infty} \subset H^2$, thus \eqref{eq:UU_Fourier_series} holds.

\end{itemize}
\end{thm}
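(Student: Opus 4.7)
The natural strategy is to invert the construction used in Theorem \ref{th:generic_reg_nec}: assemble an explicit $H^2$ spectral factor $\Phiv$ of the density $\f$ from a diagonal $H^2$ factor of $\Lam_r$ and the analytic extension of $\tilde{\UU}$, and then invoke the Wiener--Masani/Rozanov characterization that $\{\X_t\}$ is regular of rank $r$ exactly when $\f$ admits a $d\times r$ spectral factor in $H^2$, cf.\ \eqref{eq:f_factor}--\eqref{eq:Phiv}.

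First, I would use condition (2) to factor $\Lam_r$ diagonally. Since $\lambda_k(\omega)\ge\lambda_r(\omega)$ for $k=1,\dots,r$, condition (2) gives $\int_{-\pi}^{\pi}\log\lambda_k\,d\omega>-\infty$; together with $\log^+\lambda_k\le\lambda_k\le\tr\f(\omega)\in L^1$, this means each $\lambda_k$ satisfies Kolmogorov's 1D regularity criterion. Hence there is an outer $\Delta_{kk}\in H^2$, zero-free in $D$, with $|\Delta_{kk}(e^{-i\omega})|^2=2\pi\lambda_k(\omega)$ a.e. Setting $\Deltav(z):=\diag[\Delta_{11}(z),\dots,\Delta_{rr}(z)]$ gives $\Deltav\in H^2$ whose boundary value $\DD_r(\omega)$ satisfies $\DD_r(\omega)\DD_r^*(\omega)=2\pi\Lam_r(\omega)$. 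Condition (3) then supplies $\WW\in H^{\infty}$ with $\WW(e^{-i\omega})=\tilde{\UU}(\omega)$, and I would define
\[
\Phiv(z):=\WW(z)\,\Deltav(z),\qquad z\in D,
\]
so that $\Phiv$ is analytic in $D$ with boundary value $\phiv(\omega)=\tilde{\UU}(\omega)\DD_r(\omega)$, which is precisely the form \eqref{eq:trivial_spectral_factor} with $\A\equiv\I_r$.

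It remains to check (i) that $\Phiv\in H^2$, (ii) that $\f=\frac{1}{2\pi}\phiv\phiv^*$ a.e., and (iii) that $\phiv$ has rank $r$ a.e. For (i), since $\Deltav$ is diagonal, the $(j,k)$-entry of $\Phiv$ is $W_{jk}(z)\Delta_{kk}(z)$, and the orthonormality of the columns of $\tilde{\UU}$ yields $\sum_{j=1}^d|W_{jk}(e^{-i\omega})|^2=1$ a.e. Hence
\[
\|\Phiv(e^{-i\omega})\|_F^2=\sum_{k=1}^r|\Delta_{kk}(e^{-i\omega})|^2=2\pi\tr\Lam_r(\omega)\le 2\pi\tr\f(\omega),
\]
which is integrable, so $\Phiv\in H^2$. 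For (ii), a direct computation using \eqref{eq:spectr_decomp_f_short1} gives $\tfrac{1}{2\pi}\phiv\phiv^*=\tilde{\UU}\,\Lam_r\,\tilde{\UU}^*=\f$ a.e. For (iii), $\tilde{\UU}$ is sub-unitary of rank $r$ and $\DD_r$ is diagonal with a.e.\ nonzero entries, so $\phiv$ has rank $r$ a.e. The classical Wiener--Masani/Rozanov theorem then concludes that $\{\X_t\}$ is regular of rank $r$.

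The one step that is not entirely automatic is (i): a generic product of an $H^{\infty}$ matrix with an $H^2$ matrix need not be well-controlled when the intermediate dimension is large, but here the diagonal structure of $\Deltav$ combined with the column-orthonormality of $\tilde{\UU}$ makes the Frobenius bound tight and yields $\Phiv\in H^2$ for free. Beyond this, the argument is a clean construction dual to that of Theorem \ref{th:generic_reg_nec}(b), and no further obstacle is expected.
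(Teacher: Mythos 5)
Your proposal is correct and follows essentially the same route as the paper: both build the spectral factor as $\Phiv=\WW\Deltav$ with $\WW\in H^{\infty}$ the analytic extension of $\tilde{\UU}$ from condition (3) and $\Deltav\in H^2$ a zero-free diagonal factor of $2\pi\Lam_r$ obtained from condition (2), and then conclude via the classical $H^2$-factorization criterion \eqref{eq:f_factor}--\eqref{eq:Phiv}; the only cosmetic difference is that you invoke the scalar Kolmogorov criterion entrywise where the paper treats $\Lam_r$ as the density of a full-rank $r$-dimensional regular process. Your worry in step (i) is also unnecessary: each entry of $\WW\Deltav$ is a finite sum of products of an $H^{\infty}$ function with an $H^2$ function, which lies in $H^2$ regardless of the intermediate dimension.
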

\begin{proof}
Assume that conditions (1), (2), and (3) of the theorem hold. Conditions (1) and (2) give that $\Lam_r(\omega)$ is the spectral density of an $r$-dimensional regular stationary time series $\{\V_t\}_{t \in \mathbb{Z}}$ of full rank $r$, just like in the proof of Theorem \ref{th:generic_reg_nec}. Then $\Lam_r(\omega)$ has the factorization \eqref{eq:Lamr_factor1}, \eqref{eq:Lamr_factor3}. Condition (3) implies that $\tilde{\UU}(\omega) \in H^{\infty} \subset H^2$, with properties \eqref{eq:U_W1} and \eqref{eq:U_W2}. The spectral decomposition \eqref{eq:spectr_decomp_f_short1} can be written as
\[
\f(\omega) = \tilde{\UU}(\omega) \Lam_r(\omega) \tilde{\UU}^*(\omega) = \frac{1}{2 \pi} \tilde{\UU}(\omega) \DD_r(\omega) \DD_r(\omega) \tilde{\UU}^*(\omega) = \frac{1}{2 \pi} \phiv(\omega) \phiv^*(\omega).
\]
So
\begin{align*}
\phiv(\omega) &= \tilde{\UU}(\omega) \DD_r(\omega) =  \sum_{j=0}^{\infty} \psiv(j) e^{-i j \omega} \sum_{k=0}^{\infty} \deltav(j) e^{-i k \omega} = \sum_{\ell=0}^{\infty} \bv(\ell) e^{-i \ell \omega}, \\
\bv(\ell) &= \sum_{j=0}^{\ell} \psiv(j) \deltav(\ell - j)  .
\end{align*}
These imply that
\[
\Phiv(z) := \sum_{\ell=0}^{\infty} \bv(\ell) z^{\ell} = \WW(z) \Deltav(z), \quad z \in D ,
\]
where $\WW \in H^{\infty}$ and $\Deltav \in H^2$, thus $\Phiv(z) \in H^2$. By formulas \ref{eq:f_factor}--\ref{eq:Phiv} it means that the time series $\{\X_t\}$ is regular. This completes the proof of the theorem.
\end{proof}

\begin{rem}\label{re:cond_3 full_rank}
Comparing the full rank case and Theorem \ref{th:generic_reg_suf} shows that in the full rank case, conditions (1) and (2) are sufficient without any further assumption.
\end{rem}

\begin{rem}\label{re:Vtprocess}
Assume that $\{\X_t\}$ is a $d$-dimensional regular time series of rank $r$.
Assume as well that its spectral density matrix $\f$ has the spectral decomposition \eqref{eq:spectr_decomp_f_short1}. Then the $r$-dimensional time series $\{\V_t\}$ can be given by \eqref{eq:Vt_filter_def}, whose spectral density by \eqref{eq:Vtprocess} is a diagonal matrix $\f_{\V} = \Lam_r$ and, consequently, its covariance matrix function is also diagonal $\C_{\V}(h)$, $h \in \ZZ$. It means that the process $\{\V_t\}$ is cross-sectionally orthogonal. Its orthogonal components are the \emph{Dynamic Principal Components (DPC)} of the original process $\{\X_t\}$.

Assuming condition (3) of Theorem \ref{th:generic_reg_suf}, by the definition of $\{\V_t\}$ in \eqref{eq:Vt_filter_def} and by \eqref{eq:U_W2} it follows that
\[
\tilde{\UU}^*(\omega) = \sum_{j=0}^{\infty} \psiv^*(j) e^{i j \omega}, \qquad \sum_{j=0}^{\infty} \|\psiv^*(j)\|^2_F < \infty ,
\]
and
\begin{equation}\label{eq:Vt_MA}
\V_t = \int_{-\pi}^{\pi} e^{i t \omega} \sum_{j=0}^{\infty} \psiv^*(j) e^{i j \omega} d\Z_{\omega} = \sum_{j=0}^{\infty} \psiv^*(j) \int_{-\pi}^{\pi} e^{i (t+j) \omega} d\Z_{\omega}
= \sum_{j=0}^{\infty} \psiv^*(j) \X_{t+j}.
\end{equation}
It is interesting that $\{\V_t\}$ is \emph{not causally subordinated} to $\{\X_t\}$. In fact, $\{\V_t\}$ is \emph{`anti-causal'} in terms of $\{\X_t\}$: depends only on its present and future terms. Its necessity follows from formulas \eqref{eq:Vt_filter_def} and \eqref{eq:Vtprocess}: we need $\tilde{\UU}^*(\omega)$ in the definition of the process $\V_t$ if we want cross-sectionally orthogonal components and the Fourier series of $\tilde{\UU}^*(\omega)$ contains only non-positive powers of $e^{-i \omega}$.

Equation \eqref{eq:Vt_MA} shows that the $r$-dimensional DPC process $\{\V_t\}$ can be obtained from the original process $\{\X_t\}$ as a linear transform. Now we are going to show that the DPC process contains all essential information about the original process, since $\{\X_t\}$ can be obtained from $\{\V_t\}$ by a (causal) linear transform. If the rank $r$ is significantly smaller than the dimension $d$, then this fact can be used for \emph{dimension reduction}.

Using \eqref{eq:U_W2}, define
\begin{align*}
\Y_t &:= \int_{-\pi}^{\pi} e^{i t \omega} \tilde{\UU}(\omega) d\Z^{\V}_{\omega} = \int_{-\pi}^{\pi} e^{i t \omega} \sum_{j=0}^{\infty} \psiv(j) e^{-i j \omega} d\Z^{\V}_{\omega} \\
&= \sum_{j=0}^{\infty} \psiv(j) \int_{-\pi}^{\pi} e^{i (t-j) \omega} d\Z^{\V}_{\omega} = \sum_{j=0}^{\infty} \psiv(j) \V_{t-j}, \qquad t \in \ZZ .
\end{align*}
Then substituting $\f = \tilde{\UU} \Lam_r \tilde{\UU}^*$, we obtain
\begin{align*}
\| \X_t - \Y_t \|^2 &= \tr \; \cov \left\{(\X_t - \Y_t), (\X_t - \Y_t) \right\} \\
&= \tr \int_{-\pi}^{\pi} (\I_d - \tilde{\UU} \tilde{\UU}^*) \, \f \, (\I_d - \tilde{\UU} \tilde{\UU}^*) \, d \omega  \\
&= \tr \int_{-\pi}^{\pi} (\tilde{\UU}_r \Lam_r \tilde{\UU}_r^* -  \tilde{\UU}_r \Lam_r \tilde{\UU}_r^* -  \tilde{\UU}_r \Lam_r \tilde{\UU}_r^* +  \tilde{\UU}_r \Lam_r \tilde{\UU}_r^*)\, d \omega = 0 ,
\end{align*}
for any $t \in \ZZ$. This shows that $\Y_t = \X_t$ a.s.\, for any $t \in \ZZ$. Thus we have shown that
\[
\X_t = \sum_{j=0}^{\infty} \psiv(j) \V_{t-j}, \quad t \in \Z, \quad \psiv(j) = \frac{1}{2 \pi} \int_{-\pi}^{\pi} \tilde{\UU}(\omega) e^{i j \omega} d\omega ,
\]
where $\{\V_t\}$ is an $r$-dimensional cross-sectionally orthogonal process:
\[
\C_{\V}(h) = \cov(\V_{j+h}, \V_j) = \int_{-\pi}^{\pi} e^{i h \omega} \Lam_r(\omega) d\omega, \qquad h \in \ZZ.
\]
In particular,
\[
\cov(V^k_j, V^{\ell}_j) = \delta_{k \ell} \int_{-\pi}^{\pi} \lambda_k(\omega) d\omega, \qquad k=1, \dots , r.
\]

\end{rem}

\begin{rem}
This remark shows that condition (3) of Theorem \ref{th:generic_reg_suf} is natural for a regular process. To see this we compare the sufficient and necessary conditions of regularity described in \eqref{eq:causal_MA} and \eqref{eq:f_factor}, respectively, with condition (3) of Theorem \ref{th:generic_reg_suf}.

By \eqref{eq:f_factor} and \eqref{eq:spectr_decomp_f_short1},
\[
\f(\omega) = \frac{1}{2 \pi} \phiv(\omega) \phiv^*(\omega) = \tilde{\UU}(\omega) \Lada_r(\omega) \tilde{\UU}^*(\omega) .
\]
Because of the one-sided Fourier series \eqref{eq:bj_Fourier} of $\phiv( \omega)$ and \eqref{eq:U_W2} of $\tilde{\UU}(\omega)$ and also by \eqref{eq:Lamr_factor1}, we may write that
\[
\phiv(\omega) = \sqrt{2 \pi} \, \tilde{\UU}(\omega) \Lam_r^{\frac12}(\omega), \qquad \Lam_r^{\frac12}(\omega) = \frac{1}{\sqrt{2 \pi}} \DD_r(\omega) = \frac{1}{\sqrt{2 \pi}} \sum_{j=0}^{\infty} \deltav(j) e^{i j \omega},
\]
so we have
\[
\sum_{j=0}^{\infty} \bv(j) e^{-i j \omega} = \sum_{k, \ell = 0}^{\infty} \psiv(k) \deltav(\ell) e^{-i (k + \ell) \omega} , \quad \omega \in [-\pi, \pi] .
\]
It follows that
\[
\bv(j) = \sum_{k=0}^j \psiv(k) \deltav(j-k), \qquad j \ge 0 .
\]
Comparing with \eqref{eq:causal_MA}, it implies
\[
\X_t = \sum_{j=0}^{\infty} \bv(j) \xiv_{t-j} = \sum_{j=0}^{\infty} \sum_{k=0}^j \psiv(k) \deltav(j-k) \xiv_{t-j}, \quad t \in \ZZ,
\]
where $\{\xiv_t\}$ is a standard white noise sequence.
\end{rem}

\begin{cor}\label{co:Szego_Kolm_gen_mult}
Assume that $\{\X_t\}$ is a $d$-dimensional regular stationary time series of rank $r$, $1 \le r \le d$. Then a Kolmogorov--Szeg\H{o} formula holds:
\[
\det \Siga_r = (2\pi)^r \exp \int_{-\pi}^{\pi} \log \det \Lam_r(\omega) \frac{d \omega}{2\pi} = (2\pi)^r \exp \int_{-\pi}^{\pi} \sum_{j=1}^r \log \lambda_j(\omega) \frac{d \omega}{2\pi} ,
\]
where $\Lam_r$ is defined by \eqref{eq:Lam_r} and $\Siga_r$ is the covariance matrix of the innovation process of an $r$-dimensional subprocess $\{\X_t^{(r)}\}$ of rank $r$, as defined below in the proof.
\end{cor}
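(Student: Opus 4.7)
The plan is to identify the $r$-dimensional ``subprocess'' $\X_t^{(r)}$ with the auxiliary dynamic-principal-components process $\{\V_t\}$ that was already built inside the proof of Theorem~\ref{th:generic_reg_nec}(a), and then to reduce the claim to the classical full-rank multidimensional Kolmogorov--Szeg\H{o} formula \eqref{eq:Kolm_Szego_mult} applied to this auxiliary process.

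Concretely, I would set
$$\X_t^{(r)} := \V_t = \int_{-\pi}^{\pi} e^{i t \omega}\, \tilde{\UU}^*(\omega) \, d\Z_\omega, \qquad t \in \ZZ,$$
as in \eqref{eq:Vt_filter_def}. By \eqref{eq:Vtprocess}, the stationary process $\{\V_t\}$ has spectral density matrix $\Lam_r(\omega)$, which is diagonal with strictly positive diagonal entries $\lambda_1(\omega) \ge \cdots \ge \lambda_r(\omega) > 0$ a.e., and hence has full rank $r$ a.e.\ on $[-\pi, \pi]$. To invoke the classical full-rank criterion I also need $\log \det \Lam_r \in L^1$, but this is precisely the content of Theorem~\ref{th:generic_reg_nec}(a), which is available because $\{\X_t\}$ is assumed regular of rank $r$. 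Therefore $\{\V_t\}$ is a full-rank regular $r$-dimensional stationary time series, in the sense of the conditions recalled in Section~\ref{se:intro}.

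Applying the classical $r$-dimensional Kolmogorov--Szeg\H{o} formula \eqref{eq:Kolm_Szego_mult} (with $d$ replaced by $r$ and $\f$ replaced by $\Lam_r$) to $\{\V_t\} = \{\X_t^{(r)}\}$ then yields
$$\det \Siga_r = (2\pi)^r \exp \int_{-\pi}^{\pi} \log \det \Lam_r(\omega) \, \frac{d\omega}{2\pi},$$
where $\Siga_r$ is the covariance of the one-step-ahead innovation of $\V_t$ with respect to its own past $H^-_{t-1}(\V)$. The second equality displayed in the statement is immediate from the diagonality of $\Lam_r$, since $\log \det \Lam_r(\omega) = \sum_{j=1}^r \log \lambda_j(\omega)$ a.e. The only step that requires any real content is the regularity of $\{\V_t\}$: this is what actually uses Theorem~\ref{th:generic_reg_nec}(a) together with the non-degeneracy $\lambda_r(\omega) > 0$ a.e.; the rest is a direct specialization of the already-known full-rank formula, so I do not expect a substantive obstacle beyond this bookkeeping.
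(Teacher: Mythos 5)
Your argument is internally correct, but it is a genuinely different route from the paper's, and the difference matters for what the formula actually says. The paper takes $\X_t^{(r)}$ to be a literal coordinate subprocess $[X_t^{j_1},\dots,X_t^{j_r}]^T$, where the indices are those singled out in the proof of Theorem \ref{th:generic_reg_nec} so that the principal minor $M(\omega)=\det\f_r(\omega)$ is nonzero a.e.\ with $\log M\in L^1$; the full-rank formula \eqref{eq:Kolm_Szego_mult} is then applied to that subprocess, and the payoff of this choice is the final observation that the past spaces of $\{\X_t^{(r)}\}$ coincide with $H^-_{t-1}$ of the whole series, so $\Siga_r$ really is the covariance of the innovations of $\{\X_t\}$ seen through $r$ of its coordinates. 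Your $\X_t^{(r)}=\V_t$ from \eqref{eq:Vt_filter_def} is not a subprocess in this sense but an anti-causal linear filtering of $\{\X_t\}$ (cf.\ Remark \ref{re:Vtprocess}), and its innovation is taken with respect to its own past, which is a different space; so the quantity you compute is not the one the paper names, even though the steps you perform (spectral density $\Lam_r$, full rank a.e., $\log\det\Lam_r\in L^1$ from Theorem \ref{th:generic_reg_nec}(a), then \eqref{eq:Kolm_Szego_mult}) are all sound. On the other hand, your route has a concrete advantage: because the density of $\{\V_t\}$ is exactly $\Lam_r$, you land precisely on the displayed right-hand side $(2\pi)^r\exp\int_{-\pi}^{\pi}\log\det\Lam_r(\omega)\,\frac{d\omega}{2\pi}$, whereas the paper's computation literally yields $(2\pi)^r\exp\int_{-\pi}^{\pi}\log\det\f_r(\omega)\,\frac{d\omega}{2\pi}$ with $\det\f_r=\det\Lam_r\,|M_{\tilde{\UU}}|^2$, and the minor $M_{\tilde{\UU}}$ of the sub-unitary matrix $\tilde{\UU}$ satisfies $|M_{\tilde{\UU}}|\le 1$ but need not equal $1$, so the two exponents differ in general. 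In short, your proof is valid for your choice of $\X_t^{(r)}$ and matches the displayed identity exactly, while the paper's proof matches the verbal description (``subprocess'', innovations tied to $H^-_{t-1}$ of $\{\X_t\}$) but establishes the formula with $\det\f_r$ rather than $\det\Lam_r$; be aware that the two constructions are not interchangeable.
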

\begin{proof}
Let $\f_r(\omega)$ be the submatrix of $\f(\omega)$ whose determinant $\M(\omega)$ was defined in the first part of the proof of Theorem \ref{th:generic_reg_nec}:
\[
\f_r(\omega) = [f_{j_p j_q}(\omega)]_{p,q = 1}^r , \quad \det \f_r(\omega) = M(\omega) \ne 0 \quad \text{ for a.e.} \quad \omega \in [-\pi, \pi].
\]
The indices $j_p$, $p = 1, \dots , r$, define a subprocess $\X_t^{(r)} = [X_t^{j_1}, \dots , X_t^{j_r}]^T$ of the original time series $\{\X_t\}$. Then $\{\X_t^{(r)}\}$ has an absolutely continuous spectral measure with density $\f_r(\omega)$, and by \eqref{eq:logdetLam_r},
\[
\log \det \f_r = \log M  \in L^1 .
\]
Hence by full rank case \eqref{eq:Kolm_Szego_mult}, $\{\X_t^{(r)}\}$ is a regular process of full rank $r$ and
\[
\det \Siga_r = (2\pi)^r \exp \int_{-\pi}^{\pi} \log \det \f_r(\omega) \frac{d \omega}{2\pi} ,
\]
where $\Siga_r$ is the covariance matrix of the innovation process of $\{\X_t^{(r)}\}$ as defined by \eqref{eq:innovation}:
\[
\etav_t^{(r)} := \X_t^{(r)} - \mathrm{Proj}_{H^-_{t-1}} \X_t^{(r)} \quad (t \in \ZZ), \quad \Siga_r := \E \left(\etav_0^{(r)} \etav_0^{(r)*}\right) .
\]
Here we used that the past until $(t-1)$ of the subprocess $\{\X_t^{(r)}\}$ is the same as the past $H^-_{t-1}$ of $\{\X_t\}$. Really, by \eqref{eq:Wold_innov}, for the regular process $\{\X_t\}$ of rank $r$ we have a causal MA($\infty$) form:
\[
\X_t = \sum_{j=0}^{\infty} \av(j) \etav_{t-j}, \quad t \in \ZZ,
\]
and for each $t$, $\etav_t$ is linearly dependent on $\etav_t^{(r)}$. Thus
\[
\spanot \{\X^{(r)}_{t-1}, \X^{(r)}_{t-2}, \X^{(r)}_{t-3}, \dots \} = \spanot \{\X_{t-1}, \X_{t-2}, \X_{t-3}, \dots \} = H^-_{t-1} .
\]
\end{proof}


\subsection{Classification of non-regular multidimensional time series}\label{sse:class_non-regular}

We call a time series \emph{non-regular} if either it is singular or its Wold decomposition contains two orthogonal, non-vanishing processes: a regular and a singular one. The classification below follows from Theorem \ref{th:generic_reg_suf}, namely, which of its conditions (1), (2), and (3) is `the first one to be violated.'

In dimension $d > 1$ a non-regular process beyond its regular part may have a singular part with non-vanishing spectral density. For example, if $d > 1$ and $\X_t = \Y_t + \Z_t$, $\Y_t \perp \Z_t$, $t \in \ZZ$, it is possible that $\{\Y_t\}$ is regular and $\{\Z_t\}$ is singular, both with a non-vanishing spectral density.

Below we are considering a $d$-dimensional stationary time series $\{\X_t\}$ with spectral measure $d\F$.

\begin{itemize}
  \item \emph{Type (0) non-regular processes.} In this case the spectral measure $d\F$ of the time series $\{\X_t\}$ is singular w.r.t.\ the Lebesgue measure in $[-\pi, \pi]$. Clearly, type (0) non-regular processes are simply singular ones. We may further divide this class into processes with a discrete spectrum or processes with a continuous singular spectrum or processes with both.
  \item \emph{Type (1) non-regular processes.} The time series has an absolutely continuous spectral measure with density $\f$, but $\mathrm{rank}(\f)$ is not constant. It means that there exist measurable subsets $A, B \subset[-\pi, \pi]$ such that $d\omega(A) > 0$ and $d\omega(B) > 0$, $\mathrm{rank}(\f(\omega)) = r_1$ if $\omega \in A$, $\mathrm{rank}(\f(\omega)) = r_2$ if $\omega \in B$, and $r_1 \ne r_2$. Here $d\omega$ denotes the Lebesgue measure in $[-\pi, \pi]$.
  \item \emph{Type (2) non-regular processes.} The time series has an absolutely continuous spectral measure with density $\f$ which has constant rank $r$ a.e., $1 \le r \le d$, but
\[
\int_{-\pi}^{\pi} \log \det \Lam_r(\omega) d \omega = \int_{-\pi}^{\pi} \sum_{j=1}^r \log \lambda_j(\omega) \, d\omega = -\infty ,
\]
where $\Lam_r$ is defined by \eqref{eq:Lam_r}.
  \item \emph{Type (3) non-regular processes.} The time series has an absolutely continuous spectral measure with density $\f$ which has constant rank $r$ a.e., $1 \le r < d$,
\[
\int_{-\pi}^{\pi} \log \det \Lam_r(\omega) d \omega = \int_{-\pi}^{\pi} \sum_{j=1}^r \log \lambda_j(\omega) \, d\omega > -\infty ,
\]
but the unitary matrix function $\tilde{\UU}(\omega)$ appearing in the spectral decomposition of $\f(\omega)$ in \eqref{eq:spectr_decomp_f_short} cannot be defined so that it belongs to the Hardy space $H^2$.

Because it seems unknown if condition (3) of Theorem \ref{th:generic_reg_suf} is a necessary condition of regularity or not, this situation in itself does not necessarily show that the process is non-regular.
\end{itemize}

It is worth mentioning that if $\{\X_t\}$ has full rank $r=d$ and it is non-singular, then its singular part may have only a spectral measure which is singular w.r.t.\ Lebesgue measure, see Wiener and Masani (1957, Theorem 7.11).


\subsection{Examples}\label{sse:examples}

In this subsection simple examples are given for non-regular processes described in the previous subsection, and also for a regular process satisfying the conditions of Theorem \ref{th:generic_reg_suf}. Each example will have dimension $d=3$ and rank $r(\omega) \le 2$. For simplicity, in the first three examples each process will have the form $\X_t = [X^1_t, X^2_t, X^3_t]^T$, $X^3_t = X^1_t + X^2_t$, $X^1_s \perp X^2_t$, $s, t \in \ZZ$. The covariance matrix function of the process, equivalently, will be
\begin{equation}\label{eq:exmpl_C}
\C(h) = \left[
          \begin{array}{ccc}
            c_{11}(h) & 0 & c_{11}(h) \\
            0& c_{22}(h) & c_{22}(h)\\
            c_{11}(h) & c_{22}(h) & c_{11}(h)+c_{22}(h) \\
          \end{array}
        \right], \quad h \in \ZZ .
\end{equation}
Also, equivalently, the spectral measure matrix will have the form
\begin{equation}\label{eq:exmpl_F}
d\F(\omega) = \left[
                \begin{array}{ccc}
                  dF^{11}(\omega) & 0 & dF^{11}(\omega) \\
                  0 & dF^{22}(\omega) & dF^{22}(\omega) \\
                  dF^{11}(\omega) & dF^{22}(\omega) & dF^{11}(\omega) + dF^{22}(\omega) \\
                \end{array}
              \right], \quad \omega \in [-\pi, \pi].
\end{equation}
In particular, if $d\F$ is absolutely continuous with density $\f$, then \eqref{eq:exmpl_F} is replaced by
\begin{equation}\label{eq:exmpl_f}
\f(\omega) = \left[
               \begin{array}{ccc}
                 f^{11}(\omega) & 0 & f^{11}(\omega) \\
                 0 & f^{22}(\omega) & f^{22}(\omega) \\
                 f^{11}(\omega) & f^{22}(\omega) & f^{11}(\omega) + f^{22}(\omega) \\
               \end{array}
             \right].
\end{equation}
We will need the eigenvalues of $\f(\omega)$:
\begin{align*}
\det(\f - \lambda \I_3) &=
\left|
\begin{array}{ccc}
f^{11}(\omega) - \lambda & 0 & f^{11}(\omega) \\
0 & f^{22}(\omega) - \lambda & f^{22}(\omega) \\
f^{11}(\omega) & f^{22}(\omega) & f^{11}(\omega) + f^{22}(\omega) - \lambda \\
\end{array}
\right| \\
&=
\left|
\begin{array}{ccc}
f^{11}(\omega) - \lambda & 0 & f^{11}(\omega) \\
0 & f^{22}(\omega) - \lambda & f^{22}(\omega) \\
\lambda & \lambda & -\lambda \\
\end{array}
\right| \\
&= -\lambda [\lambda^2 - 2 \lambda (f^{11}(\omega) + f^{22}(\omega)) + 3 f^{11}(\omega) f^{22}(\omega)] .
\end{align*}

Thus
\[
\lambda_{1,2}(\omega) = f^{11}(\omega) + f^{22}(\omega) \pm \sqrt{f^{11}(\omega)^2 + f^{22}(\omega)^2 - \f^{11}(\omega) \f^{22}(\omega)},
\]
and $\lambda_3(\omega) = 0$. It shows that $\rk(\f(\omega)) \le 2$, and
\begin{equation}\label{eq:det_Lam}
\det \Lam_2(\omega) = \lambda_1(\omega) \lambda_2(\omega) = 3 f^{11}(\omega) f^{22}(\omega) .
\end{equation}

\subsubsection{A type (0) non-regular process}
Let $n \ge 1$ and
\[
\X_t = \sum_{j=1}^n \A_j e^{i t \omega_j}, \qquad t \in\ZZ,
\]
where $\omega_j \in (-\pi, \pi]$ are distinct frequencies and $\A_j$ are pairwise orthogonal random amplitudes, $j=1, \dots, n$. We assume that $\A_j = [A^1_j, A^2_j, A^3_j]^T$, $A^3_j = A^1_j + A^2_j$, $A^1_j \perp A^2_j$, $A^{\ell}_j \sim \mathcal{N}(0, v^{\ell}_j)$, $v^{\ell}_j > 0$, where $\ell=1,2$ and $j=1, \dots, n$.

Then
\[
\C(h) = \EE(\X_{t+h} \X_t^*) = \sum_{j=1}^n \EE(\A_j \A_j^*) e^{i h \omega_j} = \sum_{j=1}^n  e^{i h \omega_j} \left[
                   \begin{array}{ccc}
                     v^1_j & 0 & v^1_j \\
                     0 & v^2_j & v^2_j \\
                     v^1_j & v^2_j & v^1_j + v^2_j \\
                   \end{array}
                 \right]
\]
for $h \in \ZZ$. Further,
\[
d\F(\{\omega_j\}) = \left[
                   \begin{array}{ccc}
                     v^1_j & 0 & v^1_j \\
                     0 & v^2_j & v^2_j \\
                     v^1_j & v^2_j & v^1_j + v^2_j \\
                   \end{array}
                 \right],
\quad j=1, \dots, n .
\]

\subsubsection{A type (1) non-regular process}

We are going to define the first component of $\{\X_t\}$ as a regular, white noise process, the second component as a singular process. We define $\{\X_t\}$ by its spectral density \eqref{eq:exmpl_f},
where
\[
f^{11}(\omega) = \frac{1}{2 \pi}, \quad \omega \in [-\pi, \pi], \quad f^{22}(\omega)
= \left\{\begin{array}{ll}
         \frac12 & |\omega| \le 1, \\
         0 & 1 < |\omega| \le \pi.
         \end{array}
  \right.
\]
The rank $r(\omega)$ of $\f(\omega)$ is $2$ if $|\omega| \le 1$ and is $1$ if $1 < |\omega| \le \pi$, so the rank of the process is not constant.

The covariance matrix function $\C(h)$ has the form \eqref{eq:exmpl_C} with
\[
c_{11}(h) = \left\{\begin{array}{ll}
                     1 & h = 0, \\
                     0 & h \in \ZZ \setminus \{0\},
                   \end{array}
            \right.
\quad c_{22}(h) = \left\{\begin{array}{ll}
                           1 & h = 0, \\
                           \frac{\sin h}{h} & h \in \ZZ \setminus \{0\} . \\
                         \end{array}
                  \right.
\]

\subsubsection{A type (2) non-regular process}

Again, we define the process by its spectral density $\f(\omega)$, which has the form \eqref{eq:exmpl_f}, where
\[
f^{11}(\omega) = \frac{1}{2 \pi}, \quad \omega \in [-\pi, \pi], \quad f^{22}(\omega)
= \left\{\begin{array}{ll}
         0 & \omega = 0, \\
         e^{-\frac{1}{|\omega|}} & \omega \in [-\pi, \pi] \setminus \{0\}.
         \end{array}
  \right.
\]
Then $\f$ has continuous and a.e.\ positive entries on $[-\pi, \pi]$, $\rk(\f(\omega)) = 2$ a.e., and by \eqref{eq:det_Lam},
\[
\det \Lam_2(\omega) = \frac{3}{2 \pi} e^{-\frac{1}{|\omega|}} \quad \text{if} \quad \omega \ne 0.
\]
It implies that
\[
\int_{-\pi}^{\pi} \log \det \Lam_2(\omega) d\omega = \int_{-\pi}^{\pi} \left\{\log \left(\frac{3}{2 \pi}\right) - \frac{1}{|\omega|} \right\} d\omega = -\infty .
\]

\subsubsection{The problem of type (3) non-regular processes}

The next example illustrates the difficulty of showing that a non-full rank time series is non-regular of type (3). The difficulty stems from the fact that one has to show that there exists no factorization $\f(\omega) = \tilde{\UU}(\omega) \Lam_2(\omega) \tilde{\UU}^*(\omega)$ with a sub-orthogonal matrix function $\tilde{\UU}(\omega)$, $\omega \in (-\pi, \pi]$,  belonging to the Hardy space $H^{\infty}$.

Consider the time series defined by a parsimonious spectral decomposition \eqref{eq:spectr_decomp_f_short1} of its spectral density $\f$:
\[
\lambda_{1,2}(\omega) = 1, \quad \lambda_3(\omega) = 0, \quad \Lam_2(\omega) = \I_2,
\]
and
\begin{equation}\label{eq:type3}
\tilde{\UU}(\omega) = \left[
                         \begin{array}{cc}
                           g(\omega) & 0 \\
                           0 & \sqrt{2} g(\omega) \\
                           g(\omega) & 0 \\
                         \end{array}
                       \right],
\quad g(\omega) = \frac{e^{-i \omega} + e^{i 2 \omega}}{2 \sqrt{2} \left| \cos \frac32 \omega \right|}, \quad \omega \in [-\pi, \pi] .
\end{equation}
Then
\begin{equation}\label{eq:exmpl223}
\tilde{\UU}^*(\omega) \tilde{\UU}(\omega) = \I_2, \quad \f(\omega) = \tilde{\UU}(\omega) \Lam_2(\omega) \tilde{\UU}^*(\omega) = \left[
                                       \begin{array}{ccc}
                                         \frac12 & 0 & \frac12 \\
                                         0 & 1 & 0 \\
                                         \frac12 & 0 & \frac12 \\
                                       \end{array}
                                     \right] ,
\end{equation}
which means that it defines a process $\X_t = [X^1_t, X^2_t, X^3_t]^T$ such that $X^1_s \perp X^2_t$ and $X^3_t = X^1_t$ for any $s,t \in \ZZ$. This process satisfies conditions (1) and (2) of Theorem \ref{th:generic_reg_suf} with constant rank $r=2$, but the above specific choice of $\tilde{\UU}(\omega)$ does not belong to $H^{\infty}$, as shown next.

Computing the Fourier coefficients $\psi_{11}(1)$ and $\psi_{11}(-2)$ of the entry $u_{11}(\omega)$ of $\tilde{\UU}(\omega)$, we are going to see that both of them are nonzero, so the Fourier series of $\tilde{\UU}(\omega)$ is two-sided, $\tilde{\UU}(\omega)$ does not belong to the Hardy space $H^2$. By \eqref{eq:type3},
\begin{align*}
\psi_{11}(1) &:= \frac{1}{2 \pi} \int_{-\pi}^{\pi} g(\omega) e^{i \omega} d\omega = \frac{1}{2 \pi} \int_{-\pi}^{\pi} \frac{1 + e^{i 3 \omega}}{2 \left| \cos \frac32 \omega \right|} d\omega \\
&= \frac{1}{2 \pi} \int_{-\pi}^{\pi} \frac{(1 + \cos 3 \omega) + i (\sin 3 \omega)}{\sqrt{2 (1 + \cos 3 \omega)}} d\omega = \frac{2}{\pi} \ne 0 .
\end{align*}
Similarly,
\begin{align*}
\psi_{11}(-2) &:= \frac{1}{2 \pi} \int_{-\pi}^{\pi} g(\omega) e^{-i 2\omega} d\omega = \frac{1}{2 \pi} \int_{-\pi}^{\pi} \frac{e^{-i 3\omega} + 1}{2 \left| \cos \frac32 \omega \right|} d\omega \\
&= \frac{1}{2 \pi} \int_{-\pi}^{\pi} \frac{(1 + \cos 3 \omega) - i (\sin 3 \omega)}{\sqrt{2 (1 + \cos 3 \omega)}} d\omega = \frac{2}{\pi} \ne 0 .
\end{align*}

Notwithstanding, with different factorization  $\f(\omega) = \tilde{\UU}(\omega) \Lam_2(\omega) \tilde{\UU}^*(\omega)$ with a differently given $\tilde{\UU}(\omega)$ shows that, in fact, the process defined by the above spectral density $\f$ is regular. For instance, a `good' factorization is
\[
\tilde{\UU}(\omega) = \left[
                         \begin{array}{cc}
                          \frac{1}{\sqrt{2}} & 0 \\
                           0 &  1 \\
                           \frac{1}{\sqrt{2}} & 0 \\
                         \end{array}
                       \right],
\quad \omega \in [-\pi, \pi] .
\]
Example \ref{sse:regular} below is another `good' factorization, differing from this only by a factor $e^{-i \omega}$.

\subsubsection{A candidate for a type (3) non-regular process}

Take the following parsimonious spectral decomposition \eqref{eq:spectr_decomp_f_short1} of the spectral density $\f$ of a time series:
\[
\lambda_{1,2}(\omega) = 1, \quad \lambda_3(\omega) = 0, \quad \Lam_2(\omega) = \I_2,
\]
and
\begin{equation}\label{eq:bad_UU}
\tilde{\UU}(\omega) = \left[
                         \begin{array}{cc}
                           1 & 0 \\
                           0 & r(\omega) e^{i \phi(\omega)} \\
                           0 & \rho(\omega) e^{i \psi(\omega)} \\
                         \end{array}
                       \right],
\end{equation}
where $r, \rho : [-\pi, \pi] \to [0, 1]$ and $\phi, \psi : [-\pi, \pi] \to \mathbb{R}$ are continuous functions, $r^2(\omega) + \rho^2(\omega) \equiv 1$. Then $\tilde{\UU}^*(\omega) \tilde{\UU}(\omega) = \I_2$ and
\begin{equation}\label{eq:type3_f}
\f(\omega) = \left[
                   \begin{array}{ccc}
                     1 & 0 & 0 \\
                     0 & r^2(\omega) & r(\omega) \rho(\omega) e^{i (\phi(\omega) - \psi(\omega))} \\
                     0 & r(\omega) \rho(\omega) e^{- i (\phi(\omega) - \psi(\omega))} & \rho^2(\omega) \\
                   \end{array}
                 \right] .
\end{equation}
Concretely, define
\begin{equation}\label{eq:concrete}
r(\omega) e^{i \phi(\omega)} := \frac13 (e^{-i \omega} + e^{i 2 \omega}), \quad
\rho(\omega) e^{i \psi(\omega)} := c(\omega) (2 + e^{i 3 \omega}) ,
\end{equation}
where $c(\omega)$ is a suitable real valued function, see below. It follows that
\begin{align}\label{eq:r2}
r^2(\omega) &= \frac29 + \frac19(e^{i 3 \omega} + e^{-i3 \omega}) = \frac29 (1 + \cos 3 \omega) < 1, \\
\rho^2(\omega) &= c^2(\omega) (5 + 2(e^{i 3 \omega} + e^{-i 3 \omega}) = c^2(\omega) (1 + 4 (1 + \cos 3 \omega)). \label{eq:rho2}
\end{align}
Choosing
\begin{equation}\label{eq:c2}
c^2(\omega) := \frac{1 - r^2(\omega)}{1 + 4 (1 + \cos 3 \omega))} = \frac{1 - \frac29 (1 + \cos 3 \omega)}{1 + 4 (1 + \cos 3 \omega)}
\end{equation}
satisfies the condition $r^2(\omega) + \rho^2(\omega) \equiv 1$. Then $\rk (\f(\omega)) = 2$ for a.e.\ $\omega \in [-\pi, \pi]$, so the process is of constant rank $2$.

Let us try to factor the density matrix function as $\f(\omega) = \tilde{\UU}(\omega) \Lam_2(\omega) \tilde{\UU}^*(\omega)$ so that each entry of $\tilde{\UU}(\omega)$ belongs to the Hardy space $H^{\infty}$, instead of \eqref{eq:bad_UU} which does not satisfy this condition. It means that the Fourier series of each entry $u_{jk}(\omega)$ should be a finite or infinite complex linear combination of terms $e^{-i \ell \omega}$ with $\ell \ge 0$.

Since $f_{22}(\omega) = u_{22}(\omega) \overline{u}_{22}(\omega)$, by \eqref{eq:type3_f} and \eqref{eq:r2} we must have $u_{22}(\omega) = d_1 + d_2 e^{-i 3 \omega}$ with suitable complex coefficients $d_1, d_2$. Similarly, $f_{33}(\omega) = u_{32}(\omega) \overline{u}_{32}(\omega)$, so by \eqref{eq:type3_f}, \eqref{eq:rho2}, and \eqref{eq:c2} we should have $u_{32}(\omega) = h_1 + h_2 e^{-i 3 \omega}$ with suitable complex coefficients $h_1, h_2$. Then
\[
f_{23}(\omega) = u_{22}(\omega) \overline{u}_{32}(\omega) = d_1 \overline{h}_1 + d_2 \overline{h}_2 + d_2 \overline{h}_1 e^{-i 3 \omega} + d_1 \overline{h}_2 e^{i 3 \omega}.
\]
On the other hand, by \eqref{eq:type3_f}, \eqref{eq:concrete}, and \eqref{eq:c2} it follows that
\[
f_{23}(\omega) = \frac13 (e^{-i \omega} + e^{i 2 \omega}) \left(\frac{1 - \frac29 (1 + \cos 3 \omega)}{1 + 4 (1 + \cos 3 \omega)}\right)^{\frac12} (2 + e^{-i 3 \omega}) .
\]
This is a contradiction, which shows that the spectral density matrix function $\f$ cannot be factored with factor $\tilde{\UU}(\omega) \in H^{\infty}$. So the process does not satisfy condition (3) of Theorem \ref{th:generic_reg_suf}. Unfortunately, since the necessity of this condition for regularity seems unknown, this does not prove that the process is in fact non-regular.

\subsubsection{A non-full rank regular process}\label{sse:regular}

As above, the process is defined by a parsimonious spectral decomposition \eqref{eq:spectr_decomp_f_short1} of its spectral density $\f$:
\[
\lambda_{1,2}(\omega) = 1, \quad \lambda_3(\omega) = 0, \quad \Lam_2(\omega) = \I_2,
\]
and
\[
\tilde{\UU}(\omega) = \left[
                         \begin{array}{cc}
                          \frac{1}{\sqrt{2}} e^{-i \omega} & 0 \\
                           0 &  e^{-i \omega} \\
                           \frac{1}{\sqrt{2}} e^{-i \omega} & 0 \\
                         \end{array}
                       \right],
\quad \omega \in [-\pi, \pi] .
\]
Then
\[
\tilde{\UU}^*(\omega) \tilde{\UU}(\omega) = \I_2, \quad \f(\omega) = \tilde{\UU}(\omega) \Lam_2(\omega) \tilde{\UU}^*(\omega) = \left[
                                       \begin{array}{ccc}
                                         \frac12 & 0 & \frac12 \\
                                         0 & 1 & 0 \\
                                         \frac12 & 0 & \frac12 \\
                                       \end{array}
                                     \right] ,
\]
which means that it defines a process $\X_t = [X^1_t, X^2_t, X^3_t]^T$ such that $X^1_s \perp X^2_t$ and $X^3_t = X^1_t$ for any $s,t \in \ZZ$. This process obviously satisfies all conditions of Theorem \ref{th:generic_reg_suf} with constant rank $r=2$. In fact, $\{\X_t\}$ is a white noise process with covariance matrix function
\[
\C(h) = \delta_{h 0} \Siga = \delta_{h 0} \left[
                                       \begin{array}{ccc}
                                         \pi & 0 & \pi \\
                                         0 & 2 \pi & 0 \\
                                         \pi & 0 & \pi \\
                                       \end{array}
                                     \right] .
\]


\section{Low rank approximation}\label{se:low_rank}

The aim of this section is to approximate a time series of constant rank $r$ with one of smaller rank $k$. This problem was treated by Brillinger (1969) and Brillinger (1981, Chapter 9), where it was called Principal Component Analysis (PCA) in the Frequency Domain. We show the important fact that when the process is regular, the low rank approximation can also be chosen regular.

\subsection{Approximation of time series of constant rank} \label{sse:low_constant_rank}

Assume that $\{\X_t\}$ is a $d$-dimensional stationary time series of constant rank $r$, $1 \le r \le d$. By Rozanov (1967, Section I.9), it is equivalent to the assumption that $\{\X_t\}$ can be written as a sliding summation, that is, as a two-sided infinite moving average. The  spectral density $\f$ of the process has rank $r$ a.e., and so we may write its eigenvalues as
\begin{equation}\label{eq:eigenvalues}
\lambda_1(\omega) \ge \cdots \ge \lambda_r(\omega) > 0 , \quad \lambda_{r+1}(\omega) =  \cdots = \lambda_d(\omega) = 0.
\end{equation}
Also, the parsimonious spectral decomposition of $\f$ is
\begin{equation}\label{eq:spectr_of_spectr}
\f(\omega) = \sum_{j=1}^r \lambda_j(\omega) \uu_j(\omega) \uu^*_j(\omega) = \tilde{\UU}_r(\omega) \tilde{\Lam}_r(\omega) \tilde{\UU}_r^*(\omega) , \qquad \text{a.e.} \quad \omega \in [-\pi, \pi],
\end{equation}
where $\tilde{\Lam}_r(\omega) := \diag[\lambda_1(\omega) , \dots , \lambda_r(\omega)]$, $\uu_j(\omega) \in \CC^d$ $(j=1, \dots , r)$ are the corresponding orthonormal eigenvectors, and $\tilde{\UU}_r(\omega) \in \CC^{d \times r}$ is the matrix of these column vectors.

Now the problem we are treating can be described as follows. Given an integer $k$, $1 \le k \le r$, find a process $\{\X^{(k)}_t\}$ of constant rank $k$ which is a linear transform of $\{\X_t\}$ and which minimizes the distance
\begin{align}\label{eq:min_distance}
\| \X_t - \X^{(k)}_t \|^2 &= \E \left\{(\X_t - \X^{(k)}_t)^* \, (\X_t - \X^{(k)}_t) \right\} \nonumber \\
&= \tr \; \cov \left\{(\X_t - \X^{(k)}_t), (\X_t - \X^{(k)}_t) \right\}, \quad \forall t \in \ZZ.
\end{align}
In Brillinger (1981) this is called \emph{Principal Component Analysis (PCA) in the Frequency Domain}.

Consider Cram\'er's spectral representations of $\{\X_t\}$ and $\{\X^{(k)}_t\}$:
\[
\X_t = \int_{-\pi}^{\pi} e^{i t \omega} d \Z_{\omega}, \quad \X^{(k)}_t = \int_{-\pi}^{\pi} e^{i t \omega} \T(\omega) d \Z_{\omega}, \quad t \in \ZZ,
\]
where $\T(\omega)$ denotes the linear filter that results $\{\X^{(k)}_t\}$.
Then by \eqref{eq:spectr_of_spectr} we can rewrite \eqref{eq:min_distance} as
\begin{align}\label{eq:min_quadr_form}
&\| \X_t - \X^{(k)}_t \|^2 = \tr \int_{-\pi}^{\pi} (\I_d - \T(\omega)) \f(\omega) (\I_d - \T^*(\omega)) \, d \omega \nonumber \\
&= \tr \int_{-\pi}^{\pi} (\I_d - \T(\omega)) \tilde{\UU}_r(\omega) \tilde{\Lam}_r(\omega) \tilde{\UU}_r^*(\omega) (\I_d - \T^*(\omega)) \, d \omega ,
\end{align}
which clearly does not depend on $t \in \ZZ$.

To find the minimizing linear transformation $\T(\omega)$, from now on we fix an $\omega \in [-\pi, \pi]$ and we study the non-negative definite quadratic form
\[
h(\vv, \omega) := \vv^* \f(\omega) \vv = \vv^* \tilde{\UU}_r(\omega) \tilde{\Lam}_r(\omega) \tilde{\UU}_r^*(\omega) \vv ,
\]
as a function of $\vv \in \CC^d$.

Under the assumption \eqref{eq:eigenvalues}, we can find an orthonormal basis of eigenvectors of $\f(\omega)$: $\{\uu_1(\omega), \dots, \uu_d(\omega)\}$ in $\CC^d$. Then any $\vv \in \CC^d$ can be written as a linear combination $\vv = \sum_{j=1}^d c_j(\omega) \uu_j(\omega)$. It is enough to consider the value of the quadratic form $h(\vv, \omega)$ for $\vv = \uu_j(\omega)$, $j=1, \dots, d$:
\[
h(\uu_j(\omega), \omega) = \uu^*_j(\omega) \tilde{\UU}_r(\omega) \tilde{\Lam}_r(\omega) \tilde{\UU}_r^*(\omega) \uu_j(\omega) = \lambda_j(\omega) ,  \quad j=1, \dots, d .
\]
By \eqref{eq:eigenvalues}, we have
\begin{equation}\label{eq:monoton}
h(\uu_1(\omega), \omega) \ge \cdots \ge h(\uu_r(\omega), \omega) > 0, \quad h(\uu_j(\omega), \omega) = 0 \quad \text{if} \quad j = r+1, \dots, d .
\end{equation}
Since each $\uu_j(\omega) \uu^*_j(\omega)$ is a projection of $\CC^d$ onto its subspace spanned by $\uu_j(\omega)$, $\T(\omega)$ must have rank $k$ a.e., and $\I_d = \sum_{j=1}^d \uu_j(\omega) \uu^*_j(\omega)$, it follows by \eqref{eq:min_quadr_form} and \eqref{eq:monoton} that the minimizing linear transformation is the orthogonal projection
\begin{equation}\label{eq:min_lin_transf}
\T(\omega) = \tilde{\UU}_k(\omega) \tilde{\UU}^*_k(\omega) := \sum_{j=1}^k \uu_j(\omega) \uu^*_j(\omega) .
\end{equation}

Thus we have proved that
\begin{equation}\label{eq:Xt_filtered_res}
\X_t^{(k)} = \int_{-\pi}^{\pi} e^{i t \omega} \tilde{\UU}_k(\omega) \tilde{\UU}^*_k(\omega) d\Z_{\omega}, \quad t \in \ZZ.
\end{equation}
Then the spectral density of $\{\X^{(k)}_t\}$ is
\begin{align}\label{eq:density_r}
\f_k(\omega) &= \tilde{\UU}_k(\omega) \tilde{\UU}^*_k(\omega) \tilde{\UU}_r(\omega) \tilde{\Lam}_r(\omega) \tilde{\UU}^*_r(\omega) \tilde{\UU}_k(\omega) \tilde{\UU}_k^*(\omega) \nonumber \\
&= \tilde{\UU}_k(\omega)
\left[
\begin{array}{cc}
I_k & 0_{k \times (r-k)}
\end{array}
\right]
\tilde{\Lam}_r(\omega)
\left[
\begin{array}{c}
I_k \\
0_{(r-k) \times k} \\
\end{array}
\right]
\tilde{\UU}_k^*(\omega) \nonumber \\
&= \tilde{\UU}_k(\omega)
\tilde{\Lam}_k(\omega)
\tilde{\UU}_k^*(\omega), \quad \omega \in [-\pi, \pi] .
\end{align}
Further, the covariance function of $\{\X^{(k)}_t\}$ is
\begin{equation}\label{eq:cov_r}
\C_k(h) := \int_{-\pi}^{\pi} e^{i h \omega} \f_k(\omega) d\omega, \qquad h \in \ZZ .
\end{equation}

The next theorem summarizes the results above.
\begin{thm}\label{th:rank_reduction}
Assume that $\{\X_t\}$ is a $d$-dimensional stationary time series of constant rank $r$, $1 \le r \le d$, with spectral density $\f$. Let \eqref{eq:eigenvalues} and \eqref{eq:spectr_of_spectr} be the spectral decomposition of $\f$.
\begin{itemize}
\item[(a)] Then
\[
\X_t^{(k)} = \int_{-\pi}^{\pi} e^{i t \omega} \tilde{\UU}_k(\omega) \tilde{\UU}^*_k(\omega) d\Z_{\omega}, \quad t \in \ZZ,
\]
is the approximating process of rank $k$, $1 \le k \le r$, which minimizes the mean square error of the approximation.

\item[(b)] For the mean square error we have
\begin{equation}\label{eq:Xt_Xrt_error}
\| \X_t - \X_t^{(k)} \|^2 = \int_{-\pi}^{\pi} \sum_{j=k+1}^r \lambda_j(\omega) \; d\omega, \qquad t \in \ZZ,
\end{equation}
and
\begin{equation}\label{eq:rel_error1}
\frac{\| \X_t - \X_t^{(k)} \|^2}{\| \X_t \|^2} = \frac{\int_{-\pi}^{\pi} \sum_{j=k+1}^r \lambda_j(\omega) \; d\omega}{\int_{-\pi}^{\pi} \sum_{j=1}^r \lambda_j(\omega) \; d\omega} , \quad t \in \ZZ .
\end{equation}

\item[(c)] If condition
\begin{equation}\label{eq:eps_bound}
\lambda_k(\omega) \ge \Delta > \epsilon \ge \lambda_{k+1}(\omega) \qquad \forall \omega \in [-\pi, \pi],
\end{equation}
holds then we also have
\[
\| \X_t - \X_t^{(k)} \| \le (2 \pi (r-k) \epsilon)^{1/2} , \quad t \in \ZZ
\]
and
\[
\frac{\| \X_t - \X_t^{(k)} \|}{\| \X_t \|} \le \left(\frac{(r-k) \epsilon}{k \Delta}\right)^{\frac12}, \qquad t \in \ZZ .
\]
\end{itemize}
\end{thm}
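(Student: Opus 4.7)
Most of the work for part (a) has already been carried out in the paragraphs immediately preceding the theorem, so the plan is to package those computations cleanly and then derive (b) and (c) as corollaries.

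For part (a), I would argue as follows. Writing $\X^{(k)}_t = \int_{-\pi}^{\pi} e^{it\omega} \T(\omega)\,d\Z_\omega$ with $\T(\omega)$ of rank $k$ a.e., the mean square error \eqref{eq:min_quadr_form} is the integral over $\omega$ of the non-negative quantity $\tr\bigl((\I_d-\T(\omega))\f(\omega)(\I_d-\T^*(\omega))\bigr)$, so it suffices to minimize the integrand pointwise in $\omega$. Fixing $\omega$ and expanding in the orthonormal eigenbasis $\{\uu_1(\omega),\dots,\uu_d(\omega)\}$ of $\f(\omega)$ guaranteed by \eqref{eq:eigenvalues}, the monotonicity \eqref{eq:monoton} of the quadratic form $h(\vv,\omega)=\vv^*\f(\omega)\vv$ forces the optimal rank-$k$ linear map to project onto the top $k$ eigenvectors. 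This gives the minimizer $\T(\omega) = \tilde{\UU}_k(\omega)\tilde{\UU}^*_k(\omega)$ stated in \eqref{eq:min_lin_transf}, and hence the formula in (a).

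For part (b), I would substitute $\T(\omega)=\tilde{\UU}_k\tilde{\UU}^*_k$ back into \eqref{eq:min_quadr_form}. Using $\tilde{\UU}^*_r\tilde{\UU}_r = \I_r$ and the block structure $\tilde{\UU}^*_k\tilde{\UU}_r = [\,\I_k\;\;\OO_{k\times(r-k)}\,]$, the integrand collapses to $\tr(\tilde{\Lam}_r(\omega)) - \tr(\tilde{\Lam}_k(\omega)) = \sum_{j=k+1}^r \lambda_j(\omega)$, which yields \eqref{eq:Xt_Xrt_error}. For the relative error \eqref{eq:rel_error1}, I would use that $\|\X_t\|^2 = \tr \C(0) = \int_{-\pi}^{\pi} \tr\f(\omega)\,d\omega = \int_{-\pi}^{\pi}\sum_{j=1}^r \lambda_j(\omega)\,d\omega$, which is independent of $t$ by stationarity.

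For part (c), the eigenvalue gap condition \eqref{eq:eps_bound} immediately gives the pointwise bounds $\sum_{j=k+1}^r\lambda_j(\omega) \le (r-k)\epsilon$ and $\sum_{j=1}^r\lambda_j(\omega) \ge k\Delta$. Integrating the first over $[-\pi,\pi]$ (length $2\pi$) and taking square roots gives the absolute bound in (c). Dividing the same integrated upper bound by the lower bound $2\pi k \Delta$ on $\|\X_t\|^2$ and taking square roots gives the relative bound. I do not foresee a genuine obstacle here; the only thing to be a bit careful about is the pointwise minimization argument in (a), where I should note that the minimization of a non-negative integrand over a set of filters that has no coupling across different $\omega$ is equivalent to pointwise minimization, which is why the pointwise optimal $\T(\omega)$ is indeed globally optimal.
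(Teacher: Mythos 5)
Your proposal is correct and follows essentially the same route as the paper: part (a) by pointwise minimization of the quadratic form over the eigenbasis, part (b) by substituting $\T(\omega)=\tilde{\UU}_k(\omega)\tilde{\UU}^*_k(\omega)$ so the integrand collapses to $\sum_{j=k+1}^r \lambda_j(\omega)$, and part (c) by integrating the pointwise eigenvalue bounds. The only cosmetic differences are that you obtain the denominator of \eqref{eq:rel_error1} directly from $\tr \C(0)$ while the paper invokes the $k=0$ case of \eqref{eq:Xt_Xrt_error}, and you use the block identity $\tilde{\UU}^*_k\tilde{\UU}_r=[\,\I_k\;\;\OO\,]$ where the paper writes out the columns explicitly.
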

\begin{proof}
Statement (a) was shown above.

By \eqref{eq:min_quadr_form} and \eqref{eq:min_lin_transf},
\begin{align*}
  &\| \X_t - \X_t^{(k)} \|^2 \\
  &= \tr \int_{-\pi}^{\pi} \left\{\sum_{j=k+1}^{r} \uu_j(\omega) \uu^*_j(\omega) \right\} \tilde{\UU}_r(\omega) \tilde{\Lam}_r(\omega) \tilde{\UU}_r^*(\omega) \left\{\sum_{j=k+1}^{r} \uu_j(\omega) \uu^*_j(\omega)\right\} d\omega \\
  &= \tr \int_{-\pi}^{\pi} \left[
                             \begin{array}{cccc}
                               0_{d \times k} & \uu_{k+1}(\omega) & \cdots & \uu_r(\omega)
                             \end{array}
                           \right]
  \tilde{\Lam}_r(\omega) \left[
                           \begin{array}{c}
                             0_{k \times d} \\
                             \uu^*_{k+1}(\omega) \\
                             \vdots \\
                             \uu^*_r(\omega)
                           \end{array}
                         \right] d\omega \\
  &= \tr \int_{-\pi}^{\pi} \tilde{\UU}_r(\omega) \diag[0, \dots , 0, \lambda_{k+1}, \dots , \lambda_r] \tilde{\UU}^*_r(\omega) d\omega \\
  &= \int_{-\pi}^{\pi} \sum_{j=k+1}^r \lambda_j(\omega) \; d\omega ,
\end{align*}
where we finally used that the trace equals the sum of the eigenvalues of a matrix. This proves \eqref{eq:Xt_Xrt_error}. Since \eqref{eq:Xt_Xrt_error} holds for $k=0$ as well, we get \eqref{eq:rel_error1}.

Finally, condition \eqref{eq:eps_bound} and (b) imply (c).
\end{proof}

\begin{cor}\label{co:cov_appr_err}
For the difference of the covariance functions of $\{\X_t\}$ and $\{\X_t^{(k)}\}$ we have the following estimate:
\[
\| \C(h) - \C_k(h) \| \le \int_{-\pi}^{\pi} \lambda_{k+1}(\omega) d\omega, \quad h \in \ZZ,
\]
where the matrix norm is the spectral norm. If condition \eqref{eq:eps_bound} holds then we have the bound
\[
\| \C(h) - \C_k(h) \| \le 2 \pi \epsilon , \quad h \in \ZZ .
\]
\end{cor}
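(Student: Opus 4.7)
The plan is to reduce the covariance bound to a pointwise spectral-density bound via the inverse Fourier formula, and then use the parsimonious spectral decomposition to read off the norm of the residual density.

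First I would write
\[
\C(h) - \C_k(h) = \int_{-\pi}^{\pi} e^{i h \omega} \bigl(\f(\omega) - \f_k(\omega)\bigr) \, d\omega, \qquad h \in \ZZ,
\]
which follows from the spectral representation and the formula \eqref{eq:cov_r} for $\C_k$. By pairing against an arbitrary pair of unit vectors $\uu, \vv \in \CC^d$ and using that $|e^{ih\omega}|=1$, the operator-norm triangle inequality for matrix-valued integrals gives
\[
\|\C(h) - \C_k(h)\| \le \int_{-\pi}^{\pi} \|\f(\omega) - \f_k(\omega)\| \, d\omega.
\]

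Next I would identify the integrand explicitly. Subtracting \eqref{eq:density_r} from \eqref{eq:spectr_of_spectr} yields
\[
\f(\omega) - \f_k(\omega) = \sum_{j=k+1}^{r} \lambda_j(\omega) \, \uu_j(\omega) \uu_j^*(\omega),
\]
which is the spectral decomposition of a non-negative definite matrix whose nonzero eigenvalues are exactly $\lambda_{k+1}(\omega), \dots, \lambda_r(\omega)$. Hence its spectral norm equals its largest eigenvalue $\lambda_{k+1}(\omega)$. Substituting this into the previous display yields the first claim,
\[
\|\C(h) - \C_k(h)\| \le \int_{-\pi}^{\pi} \lambda_{k+1}(\omega) \, d\omega.
\]

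Finally, under the hypothesis \eqref{eq:eps_bound} we have $\lambda_{k+1}(\omega) \le \epsilon$ uniformly in $\omega$, so the integral is bounded by $2\pi\epsilon$, which gives the second claim. No step looks like a genuine obstacle; the only thing to be mildly careful about is justifying the operator-norm triangle inequality for the matrix-valued integral, which is standard once one tests against unit vectors and invokes the scalar triangle inequality for $\int_{-\pi}^{\pi}\uu^*(\f(\omega)-\f_k(\omega))\vv\, d\omega$.
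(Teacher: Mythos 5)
Your proposal is correct and follows essentially the same route as the paper: express $\C(h)-\C_k(h)$ as the integral of $e^{ih\omega}(\f(\omega)-\f_k(\omega))$, bound the spectral norm of the integrand pointwise by $\lambda_{k+1}(\omega)$ using the parsimonious spectral decomposition, and integrate. The paper reaches the same pointwise bound via submultiplicativity with the sub-unitary factors $\tilde{\UU}_r$, while you read it off directly as the top eigenvalue of the residual $\sum_{j=k+1}^{r}\lambda_j\uu_j\uu_j^*$; the two are equivalent.
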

\begin{proof}
By \eqref{eq:spectr_of_spectr}, \eqref{eq:density_r} and \eqref{eq:cov_r} it follows that
\begin{align*}
&\| \C(h) - \C_k(h) \| = \left\| \int_{-\pi}^{\pi} e^{i h \omega} (f(\omega) - f_k(\omega)) d\omega \right\| \\
&= \left\| \int_{-\pi}^{\pi} e^{i h \omega} \tilde{\UU}_r(\omega) \left\{\tilde{\Lam}_r(\omega) - \diag[\lambda_1(\omega), \dots , \lambda_k(\omega), 0, \dots , 0]\right\} \tilde{\UU}^*_r(\omega) d\omega \right\| \\
&\le \int_{-\pi}^{\pi} \|\tilde{\UU}_r(\omega)\| \cdot \left\| \diag[0, \dots , 0, \lambda_{k+1}(\omega), \dots , \lambda_r(\omega)] \right\| \cdot \|\tilde{\UU}^*_r(\omega)\| d\omega \\
&= \int_{-\pi}^{\pi} \left\| \diag[0, \dots , 0, \lambda_{k+1}(\omega), \dots , \lambda_r(\omega)] \right\| d\omega \\
&= \int_{-\pi}^{\pi} \lambda_{k+1}(\omega) d\omega .
\end{align*}
\end{proof}

Equation \eqref{eq:Xt_filtered_res} can be factored. One can take the Fourier series of the sub-unitary matrix function $\tilde{\UU}_k(\omega) \in L^2$:
\[
\tilde{\UU}_k(\omega) = \sum_{j=-\infty}^{\infty} \psiv(j) e^{-i j \omega}, \quad \psiv(j) = \frac{1}{2 \pi} \int_{-\pi}^{\pi} e^{i j \omega} \tilde{\UU}_k(\omega) d\omega \in \CC^{d \times k},
\]
where $\sum_{j=-\infty}^{\infty} \| \psiv(j) \|^2_F < \infty$. Consequently,
\[
\tilde{\UU}^*_k(\omega) = \sum_{j=-\infty}^{\infty} \psiv^*(j) e^{i j \omega} , \quad \omega \in [-\pi, \pi] .
\]
If the time series $\{\V_t\}$ is defined by the linear filter \eqref{eq:Vt_filter_def}, then similarly to \eqref{eq:Vt_MA} it follows that $\{\V_t\}$ can be obtained from the original time series $\{\X_t\}$ by a sliding summation:
\[
\V_t = \sum_{j=-\infty}^{\infty} \psiv^*(j) \X_{t+j}, \quad t \in \ZZ,
\]
and similarly to \eqref{eq:Vtprocess}, its spectral density is a diagonal matrix:
\[
f_{\V}(\omega) = \Lada_k(\omega) = \diag[\lambda_1(\omega), \dots , \lambda_k(\omega)] .
\]
It means that the covariance matrix function of $\{\V_t\}$ is also diagonal:
\[
\C_{\V}(h) = \diag[c_{11}(h), \dots , c_{kk}(h)], \quad c_{jj}(h) = \int_{-\pi}^{\pi} e^{i h \omega} \lambda_j(\omega) d\omega , \quad h \in \ZZ,
\]
that is, the components of the process $\{\V_t\}$ are orthogonal to each other. Thus the process $\{\V_t\}$ can be called $k$-dimensional \emph{Dynamic Principal Components (DPC)} of the $d$-dimensional process $\{\X_t\}$.

Using a second linear filtration, which is the adjoint $\psiv$ of the previous filtration $\psiv^*$, one can obtain the $k$-rank approximation $\{\X^{(k)}_t\}$ from $\{\V_t\}$:
\begin{align}\label{eq:k_dim_reconstruct}
\X^{(k)}_t &= \int_{-\pi}^{\pi} e^{i t \omega} \tilde{\UU}_k(\omega) \tilde{\UU}^*_k(\omega) d\Z_{\omega} = \int_{-\pi}^{\pi} e^{i t \omega} \tilde{\UU}_k(\omega) d\Z^{\V}_{\omega} \nonumber \\
&= \sum_{j=-\infty}^{\infty} \psiv(j) \V_{t-j} , \quad t \in \ZZ.
\end{align}

Notice the \emph{dimension reduction} in this approximation. Dimension $d$ of the original process $\{\X_t\}$ can be reduced to dimension $k < d$ with the cross-sectionally orthogonal DPC process $\{\V_t\}$, obtained by linear filtration, from which the low-rank approximation $\{\X^{(k)}_t\}$ can be reconstructed also by linear filtration. Of course, this is useful only if the error of the approximation given by Theorem \ref{th:rank_reduction} is small enough.

Since $\tilde{\UU}_k \tilde{\UU}^*_k \in L^2$ as well, one can take the $L^2$-convergent Fourier series
\[
\tilde{\UU}_k(\omega) \tilde{\UU}^*_k(\omega) = \sum_{j, \ell = -\infty}^{\infty} \psiv(j) e^{-i j \omega} \psiv^*(\ell) e^{i \ell \omega} = \sum_{m=-\infty}^{\infty} \wv(m) e^{- i m \omega} ,
\]
where $\omega \in [-\pi, \pi]$ and
\[
\wv(m) = \sum_{j = -\infty}^{\infty} \psiv(j) \, \psiv^*(j-m) \in \CC^{d \times d} , \quad \sum_{m=-\infty}^{\infty} \| \wv(m) \|^2_F < \infty .
\]
It implies that the filtered process $\{\X_t^{(k)}\}$ can be obtained directly from $\{\X_t\}$ by a two-sided sliding summation:
\[
\X_t^{(k)} = \sum_{m=-\infty}^{\infty} \wv(m) \X_{t-m} , \quad t \in \ZZ .
\]

\begin{rem}\label{re:dyn_factor}
By Theorem \ref{th:rank_reduction}, a $d$-dimensional time series $\{\X_t\}$ can be approximated by a rank $k < d$ time series $\{\X^{(k)}_t\}$ with error process $\{\Y^{(k)}_t\}$:
\[
\X_t = \X^{(k)}_t + \Y^{(k)}_t, \qquad t \in \ZZ,
\]
and if for the smallest $d-k$ eigenvalues of the spectral density $\f$ of $\{\X_t\}$ we have a uniform upper bound $\lambda_j(\omega) \le \epsilon$ for any $j=k+1, \dots , d$ and for any $\omega \in [-\pi, \pi]$, then we have a uniform mean square error bound for the error process:  $\|\Y^{(k)}_t\|^2 \le 2 \pi (d-k) \epsilon$. Behind the rank $k$ process $\{\X^{(k)}_t\}$ there is a $k$-dimensional DPC process $\{\V_t\}$ with orthogonal components, from which one can reconstruct $\{\X^{(k)}_t\}$ by \eqref{eq:k_dim_reconstruct}.

\end{rem}

\subsection{Approximation of regular time series}\label{sse:low_regular}

\begin{pro}\label{pr:appr_regular}
Assume that $\{\X_t\}$ is a $d$-dimensional time series of rank $r$ which satisfies all the three conditions of Theorem \ref{th:generic_reg_suf}. Then $\{\X_t\}$ is regular and the rank $k$ approximation $\{\X_t^{(k)}\}$, $1 \le k \le r$, defined in \eqref{eq:Xt_filtered_res} is also a regular time series.
\end{pro}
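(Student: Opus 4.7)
The plan is to verify the three sufficient conditions of Theorem \ref{th:generic_reg_suf} for the rank-$k$ approximation $\{\X_t^{(k)}\}$, given that they already hold for $\{\X_t\}$. From the spectral representation \eqref{eq:Xt_filtered_res} and the derivation in \eqref{eq:density_r}, the spectral measure of $\{\X_t^{(k)}\}$ is absolutely continuous with density
\[
\f_k(\omega) = \tilde{\UU}_k(\omega)\,\tilde{\Lam}_k(\omega)\,\tilde{\UU}_k^*(\omega), \qquad \tilde{\Lam}_k(\omega)=\diag[\lambda_1(\omega),\dots,\lambda_k(\omega)],
\]
and since $\lambda_1(\omega)\ge\cdots\ge\lambda_k(\omega)>0$ a.e.\ by \eqref{eq:eigenvalues}, the rank of $\f_k$ is constant and equal to $k$ a.e. This establishes condition (1).

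For condition (2), observe that the diagonal matrix of nonzero eigenvalues of $\f_k$ is precisely $\tilde{\Lam}_k$, so in the notation of Theorem \ref{th:generic_reg_suf} the corresponding object $\Lam_k$ coincides with $\tilde{\Lam}_k$. I will show $\log\det\tilde{\Lam}_k\in L^1$ by bounding each $\log\lambda_j$, $j=1,\dots,k$, above and below by integrable functions. Above, the elementary inequality $\log x \le x$ yields $\log\lambda_j(\omega)\le\lambda_j(\omega)\le\tr\f(\omega)\in L^1$. Below, the monotonicity $\lambda_j(\omega)\ge\lambda_r(\omega)$ for $j\le k\le r$ gives $\log\lambda_j(\omega)\ge\log\lambda_r(\omega)$, and $\log\lambda_r\in L^1$ by condition (2) of Theorem \ref{th:generic_reg_suf} together with \eqref{eq:log_lambda}. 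Summing over $j=1,\dots,k$ delivers $\log\det\tilde{\Lam}_k\in L^1$.

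For condition (3), by hypothesis the sub-unitary matrix $\tilde{\UU}(\omega)=[\uu_1(\omega),\dots,\uu_r(\omega)]$ associated with \eqref{eq:spectr_decomp_f_short1} can be chosen in $H^{\infty}$, meaning every entry is the boundary value of a bounded analytic function on $D$ with one-sided Fourier series as in \eqref{eq:UU_Fourier_series}. The matrix $\tilde{\UU}_k(\omega)=[\uu_1(\omega),\dots,\uu_k(\omega)]$ used in \eqref{eq:min_lin_transf} consists of the first $k$ columns of this $\tilde{\UU}(\omega)$; it is therefore itself entry-wise in $H^{\infty}\subset H^2$, and provides a valid choice of sub-unitary matrix in the spectral decomposition of $\f_k$. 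Hence condition (3) holds for $\{\X_t^{(k)}\}$.

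With all three conditions of Theorem \ref{th:generic_reg_suf} verified for the approximating process, the theorem applies and yields regularity of $\{\X_t^{(k)}\}$. I do not anticipate a real obstacle: the main subtle point is that the eigenvector matrix for the reduced process is literally a column-truncation of the one for $\{\X_t\}$, so the ``right'' choice of $\tilde{\UU}_k$ is inherited directly from the $H^{\infty}$ choice supplied by condition (3) for $\{\X_t\}$, with no need to re-solve the matrix factorization problem at rank $k$.
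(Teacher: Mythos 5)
Your proposal is correct and follows essentially the same route as the paper: condition (1) via the constant rank $k$ of $\f_k$ from \eqref{eq:density_r}, condition (2) from the monotonicity $\lambda_j \ge \lambda_r$ together with \eqref{eq:log_lambda} (the paper phrases this as a one-line contradiction, you spell out the integrable upper and lower bounds), and condition (3) from the fact that $\tilde{\UU}_k$ is a column-truncation of the $H^{\infty}$ choice of $\tilde{\UU}$. The only added value in your write-up is the explicit verification that $\tilde{\UU}_k\tilde{\Lam}_k\tilde{\UU}_k^*$ is a genuine parsimonious spectral decomposition of $\f_k$, which the paper leaves implicit.
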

\begin{proof}
We want to check that the conditions in Theorem \ref{th:generic_reg_suf} hold for $\{\X_t^{(k)}\}$. By \eqref{eq:density_r}, $\{\X_t^{(k)}\}$ has an absolutely continuous spectral measure with density of constant rank $k$, so condition (1) holds.

If $\int_{-\pi}^{\pi} \log \lambda_k(\omega) d\omega = -\infty$, then that would contradict the regularity of the original process $\{\X_t\}$, and this proves condition (2).

It follows from Theorem \ref{th:generic_reg_suf} that $\tilde{\UU}_r(\omega)$ belongs to the Hardy space $H^{\infty}$, so the same holds for $\tilde{\UU}_k(\omega)$ as well, since its columns are subset of the former's. This proves condition (3).
\end{proof}

Theorem \ref{th:rank_reduction} and Corollary \ref{co:cov_appr_err} are valid for regular processes without change. However, the factorization of the approximation discussed above is different in the regular case, because several of the summations become one-sided. Thus we have
\[
\tilde{\UU}^*_k(\omega) = \sum_{j=0}^{\infty} \psiv^*(j) e^{i j \omega} , \quad \omega \in [-\pi, \pi] .
\]
Consequently, the $k$-dimensional, cross-sectionally orthogonal process
$\{\V_t\}$ becomes
\[
\V_t = \sum_{j=0}^{\infty} \psiv^*(j) \X_{t+j}, \quad t \in \ZZ.
\]
Further, the reconstruction of the $k$-rank approximation $\{\X^{(k)}_t\}$ from $\{\V_t\}$ is
\[
\X^{(k)}_t = \sum_{j=0}^{\infty} \psiv(j) \V_{t-j} , \quad t \in \ZZ.
\]

The direct evaluation of $\{\X^{(k)}_t\}$ from $\{\X_t\}$ takes now the following form:
\[
\tilde{\UU}_k(\omega) \tilde{\UU}^*_k(\omega) = \sum_{j, \ell = 0}^{\infty} \psiv(j) e^{-i j \omega} \psiv^*(\ell) e^{i \ell \omega} = \sum_{m=-\infty}^{\infty} \wv(m) e^{- i m \omega} , \quad \omega \in [-\pi, \pi],
\]
where $\wv(m) = \sum_{j = \max(0, m)}^{\infty} \psiv(j) \, \psiv^*(j-m)$.
It implies that the filtered process $\{\X_t^{(k)}\}$ is \emph{not} causally subordinated to the original regular process $\{\X_t\}$ in general, since it can be obtained from $\{\X_t\}$ by a two-sided sliding summation:
\[
\X_t^{(k)} = \sum_{m=-\infty}^{\infty} \wv(m) \X_{t-m} , \quad t \in \ZZ .
\]
On the other hand, it is clear that if $\| \psiv(j) \|_F$ goes to $0$ fast enough as $j \to \infty$, one does not have to use too many `future' terms of $\{\X_t\}$ to get a good enough approximation of $\{\X_t^{(k)}\}$. In practice one can also replace the future values of $\{\X_t\}$ by $\0$ to get a causal approximation of $\{\X^{(k)}_t\}$.


\section*{Acknowledgement}

The research underlying the present paper and carried out at the Budapest
University of Technology and Economics was supported by the
National Research Development and Innovation Fund based on the charter
of bolster issued by the National Research, Development and Innovation
Office under the auspices of the Hungarian Ministry for Innovation and Technology, so
was supported by the National Research, Development and Innovation Fund
(TUDFO/51757/2019-ITM, Thematic Excellence Program).

The author is obliged to Professor Marianna Bolla for her very useful advice and grateful to the anonymous referees for their helpful suggestions.


\section*{Data Availability Statement}

Data sharing is not applicable to this article as no datasets were generated or analysed during the current study.


\end{document}